\newtheorem{thm}[subsubsection]{Theorem}
\newtheorem{lem}[subsubsection]{Lemma}
\newtheorem{prop}[subsubsection]{Proposition}
\newtheorem{cor}[subsubsection]{Corollary}
\theoremstyle{remark}
\newtheorem{rem}[subsubsection]{Remark}
\theoremstyle{definition}
\numberwithin{equation}{section}
\nc\C{\eC}
\nc\Cm{\C_-}
\nc\Cp{\C_+}
\nc\CC{\sC}
\nc\CCp{\sC_+}
\nc\CCm{\sC_-}
\nop\Inv{Inv}
\title{Radon inversion formulas over local fields}
\author{Jonathan Wang}
\begin{document} 
\maketitle

\begin{abstract}
Let $F$ be a local field and $n\ge 2$ an integer. 
We study the Radon transform as an operator 
$M : \Cp \to \Cm$ from the space of smooth $K$-finite functions
on $F^n \setminus \{0\}$ with bounded support to the space
of smooth $K$-finite functions on $F^n \setminus \{0\}$ supported away 
from a neighborhood of $0$.  
These spaces naturally arise in the theory of automorphic forms. 
We prove that $M$ is an isomorphism and provide formulas for $M^{-1}$. 
In the real case, we show that when $K$-finiteness
is dropped from the definitions, the analog of $M$ is not surjective. 

\end{abstract}

\section{Introduction}

\subsection{Some notation}
\subsubsection{}
Let $F$ be a local field  
(i.e., $F$ is either non-Archimedean or $\bbR$ or $\bbC$).
Let $G$ denote the topological group $\GL_n(F)$ for an integer $n\ge 2$. 

Let $K$ be the standard maximal 
compact subgroup of $G$ (i.e., if $F$ is non-Archimedean then $K = \GL_n(\eO)$, where $\eO\subset F$
is the ring of integers, if $F = \bbR$ then $K = O(n)$, and if $F = \bbC$ then $K = U(n)$). 

\subsubsection{} 
We fix a field $E$ of characteristic $0$; if $F$ is Archimedean we assume
that $E$ equals $\bbC$. Unless otherwise specified, all 
functions will take values in $E$.

\subsubsection{} Let $\C$ denote the space of $K$-finite $C^\infty$
functions on $F^n \setminus \{0\}$. In \S\ref{sect:C} we define the subspace 
$\Cp \subset \C$ consisting of functions with bounded support and the
subspace $\Cm \subset \C$ consisting of functions supported away from a neighborhood of $0$. 

\subsection{Subject of this article} In this article we consider the Radon transform 
as an operator \[ M : \Cp \to \Cm. \] 
When $F$ is non-Archimedean, $M$ is known to be an isomorphism \cite{BK}. 
An explicit formula for the inverse was, however, not present in the literature. There is a `classical' 
inversion formula due to \v{C}ernov \cite{Chernov} on the space of Schwartz functions, but its
relation to $M^{-1}$ is not obvious.
We formulate and prove a simple formula for $M^{-1}$ in the non-Archimedean case
and relate it to \v{C}ernov's formula. 

In the Archimedean case, the invertibility of $M$ was \emph{a priori} unclear due
to the nonstandard nature of the function spaces $\C_\pm$. We prove that $M$ is indeed
an isomorphism when $F$ is Archimedean and provide formulas for $M^{-1}$ (here $K$-finiteness
of $\eC$ plays a crucial role).

\subsection{Motivation}
Our interest in the operator $M$ originates from the classical theory of automorphic forms. 
Let $G$ denote the algebraic group $\SL_2$ and $N$ (resp.~$N^-$) the subgroup of strictly upper (resp.~lower) triangular matrices and $T$ the maximal torus of diagonal matrices. 
Then $G(F)/N(F) = F^2 \setminus \{0\}$ and $G(F)/N^-(F) = F^2\setminus \{0\}$. 
The operator $M$ is the standard (local) intertwiner $M:\Cp(G(F)/N(F)) \to \Cm(G(F)/N^-(F))$. 

While we work only with the local field $F$, one gets a global analog of $M$ by
considering the standard intertwiner $M : \Cp(G(\bbA)/T(F)N(\bbA)) \to \Cm(G(\bbA)/T(F)N(\bbA))$
where $F$ is a global field and $\bbA$ the adele ring. 
The intertwiner plays an important role in the theory of Eisenstein series and 
their constant terms \cite[\S 3.7]{Bump}. The constant terms
of automorphic forms reside in the space $\Cm(G(\bbA)/T(F)N(\bbA))$, which makes it a
natural space to study in this setting. The results of this article are used 
to prove invertibility of the global intertwiner in \cite{DW}. 

In the situation where $F$ is a non-Archimedean local field, the operator $M^{-1}$ is
essentially the same as the `Bernstein map' introduced in \cite[Definition 5.3]{BK}; 
the precise relation between the two is explained in \cite[Theorem 7.5]{BK}. 
The Bernstein map is also studied in \cite{SV} (there it is called the asymptotic map)
in the more general context of spherical varieties. 

In the real case, the Radon transform has been studied extensively
by analysts (\cite{Helgason94}, \cite{Helgason-GGA}, \cite{Helgason-Radon})
over slightly different function spaces. 

\subsection{Structure of the article}
In \S\ref{sect:Radon} we define the subspaces $\eC_\pm \subset \eC$ and 
recall the definition of the Radon transform over a general local field $F$. 

In \S\ref{sect:nonarch}, we consider the case when $F$ is non-Archimedean. 
We prove that $M$ is invertible and give a formula for $M^{-1}$ in Theorem~\ref{thm:nonarch}. 
This is done by relating the Radon transform to the Fourier transform (\S\ref{sect:fourier}-\ref{sect:RadF}). We deduce the previously known Radon inversion formula of \v{C}ernov \cite{Chernov} 
from Theorem~\ref{thm:nonarch} in \S\ref{sect:chernov}. 

We consider the real case in \S\ref{sect:real}. The formula for
$M^{-1}$ is given on each $K$-isotypic component of $\Cm$ in
Theorem~\ref{thm:real} in terms of convolution with a distribution
on $\bbR_{>0}$. The Mellin transform of this distribution is 
computed in Theorem~\ref{thm:convR}. The proof of the theorems is
in \S\ref{sect:proofR}. The invertibility of $M$ heavily relies on 
the $K$-finiteness assumption in the definition of $\C$. 
In \S\ref{sect:nonKfin}, we prove (Corollary~\ref{cor:notsurj}) 
that the analog of $M$ is not surjective when $K$-finiteness is dropped
from the definitions.

In \S\ref{sect:complex}, the complex case is developed in the same way 
as the real case. The inversion formula is given in Theorem~\ref{thm:complex} 
and the reformulation using the Mellin transform is Theorem~\ref{thm:convC}.

\subsection{Acknowledgments} 
The research was partially supported
by the Department of Defense (DoD) through the NDSEG fellowship.
I am very thankful to 
my doctoral advisor Vladimir Drinfeld for his continual guidance
and support throughout this project. 

\section{Recollections on the Radon transform}\label{sect:Radon}
\subsection{The norm on $F^n$} 
Let $\abs{\cdot}$ denote the normalized absolute
value on $F$ when $F$ is non-Archimedean and the usual absolute value\footnote{If $F=\bbR$, then
the normalized absolute value coincides with the usual absolute value. If $F=\bbC$, then 
the normalized absolute value is the square of the usual absolute value.} when $F$ is Archimedean. 
For $a \in F^\times$, set $v(a) := -\log \abs{a}$. If $F$ is
non-Archimedean $\log$ stands for $\log_q$, where $q$ is the order
of the residue field of $F$. If $F$ is Archimedean, $\log$ is understood as
the natural logarithm. 

We define a norm $\norm{\cdot}$ on $F^n$ as follows. 
If $F$ is non-Archimedean, then $\norm{\cdot}$ is the norm induced by 
the standard lattice $\eO^n$ (i.e., $\norm{x}$ is the
maximum of the absolute values of the coordinates of $x \in F^n$). 
If $F$ is Archimedean, then $\norm{\cdot}$ is induced by the standard 
Euclidean/Hermitian inner product (i.e., the square root of the 
sum of the absolute values squared). 

For $x\in F^n \setminus \{0\}$, set $v(x):= -\log \norm x$. 
 
\subsection{The spaces $\C, \C_c, \C_\pm$}  \label{sect:C}
Let $\C$ denote the space of $K$-finite $C^\infty$ functions on 
$F^n \setminus \{0\}$ (recall that if $F$ is non-Archimedean, 
$C^\infty$ means locally constant). 
Let $\C_c \subset \C$ be the subspace of compactly supported functions
on $F^n\setminus \{0\}$. 

Given a real number $R$, let $\C_{\le R}\subset\C$ denote the set
of all functions $\varphi\in \C$ such that $\varphi(\xi)\ne 0$ only if $v(\xi) \le R$. 
Similarly, we have $\C_{\ge R}, \C_{>R}$, and so on.
Let $\Cm$ denote the union of the subspaces $\C_{\le R}$ for all $R$. 
Let $\Cp$ denote the union of the subspaces $\C_{\ge R}$ for all $R$. 
Clearly $\Cm \cap \Cp = \C_c$ and $\C_- + \C_+ = \C$. 

\subsection{Radon transform}  \label{sect:radon}
Equip $F$ with the following Haar measure: if $F$ is non-Archimedean we require
that $\on{mes}(\eO)=1$; if $F$ is Archimedean we use the usual Lebesgue measure.
Let the measure on $F^n$ be the product of the measures on $n$ copies of $F$. 
Fix the Haar measure on $F^\times$ to be $d^\times t:= \frac{dt}{\abs t}$.

Let $f \in \Cp$. The Radon transform $\eR f(\xi, s)$,
for $\xi \in F^n \setminus \{0\}$ and $t \in F^\times$, is defined by 
the formula
\[ \eR f(\xi,t) = \int_{F^n} f(x) \delta(\xi \cdot x - t) dx, \]
where $\xi\cdot x = \xi_1 x_1 + \dotsb + \xi_n x_n$ and 
$\delta$ is the delta distribution on $F$.
The expression for $\eR f(\xi,t)$ can also be written directly as
\[ \eR f(\xi,t) = \int_{\xi \cdot x = t} f(x) d\mu_\xi \]
where $d\mu_\xi$ is the measure on the hyperplane $\xi \cdot x = t$ such that
$d\mu_\xi dt = dx$. We get an operator $M : \Cp \to \Cm$ by setting 
\begin{equation}\label{eqn:M} 
	M f(\xi) = \int_{\xi \cdot x = 1} f(x) d\mu_\xi. 
\end{equation}

\begin{prop} \label{prop:Mfil}
	For any number $R$ one has $M(\C_{\ge R}) \subset \C_{\le -R}$.  
\end{prop}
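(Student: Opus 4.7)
The plan is to exploit a Cauchy--Schwarz--type inequality between the norm $\norm{\cdot}$ on $F^n$ and the absolute value on $F$. The key claim to establish is that any pair $(\xi, x)$ satisfying $\xi\cdot x = 1$ must satisfy
\[ \norm{\xi}\cdot\norm{x} \,\ge\, 1, \]
or, equivalently after applying $-\log$, $v(\xi) + v(x) \le 0$.

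I would verify this bound case by case. In the non-Archimedean case it follows from the ultrametric inequality: $1 = \abs{\xi\cdot x} \le \max_i \abs{\xi_i x_i} \le \norm{\xi}\norm{x}$. In the real case it is the standard Cauchy--Schwarz inequality for the Euclidean inner product. In the complex case one applies Cauchy--Schwarz to the Hermitian inner product; the main point requiring care is that the normalized absolute value $\abs{\cdot}$ on $F = \bbC$ is the \emph{square} of the usual one, while $\norm{\cdot}$ is the usual Hermitian norm. Working with the usual absolute value, Cauchy--Schwarz yields $1 = \abs{\xi\cdot x}_{\mathrm{usual}} \le \norm{\xi}\norm{x}$, which is exactly the bound needed (so the squaring issue does not affect the conclusion).

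With the inequality in hand, the proposition follows at once. Given $f\in\C_{\ge R}$, the integrand in
\[ Mf(\xi) = \int_{\xi\cdot x = 1} f(x)\, d\mu_\xi \]
vanishes outside $\{x : v(x)\ge R\}$. Hence $Mf(\xi) \ne 0$ forces the hyperplane $\xi\cdot x = 1$ to meet this region, and for any such $x$ the displayed inequality gives $v(\xi) \le -v(x) \le -R$, whence $Mf \in \C_{\le -R}$.

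There is essentially no obstacle beyond the bookkeeping with the two normalizations of the absolute value in the complex case; once one is careful to apply Cauchy--Schwarz in its ``usual'' form, the three cases fit under a single uniform inequality, and the support statement is an immediate consequence of the support hypothesis on $f$.
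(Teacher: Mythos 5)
Your proof is correct and is essentially the paper's argument: the paper's one-line proof is just the contrapositive of your inequality $v(\xi)+v(x)\le 0$ for $\xi\cdot x=1$, i.e.\ $\norm{\xi}\,\norm{x}\ge 1$, applied to the support condition on $f$. (One tiny remark: the paper's $\abs{\cdot}$ on Archimedean $F$ is the \emph{usual} absolute value, and in any case both $v(\xi)$ and $v(x)$ are defined via $\norm{\cdot}$, so the normalization worry in the complex case never even arises.)
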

\begin{proof}
	Let $f \in \C_{\ge R}$ and $\xi \in F^n \setminus\{0\}$ with $v(\xi)
	> -R$. Then $\xi \cdot x = 1$ implies $v(x) <R$, so $f(x)=0$. 
	Therefore $Mf \in \C_{\le -R}$.
\end{proof}

\subsubsection{}
The natural action of $G$ on $F^n \setminus \{0\}$ induces
a $G$-action on $\C$ by $(g\cdot f)(x):=f(g^{-1}x)$ for $g \in G, f\in \C,
x\in F^n \setminus \{0\}$. Then 
\begin{equation}\label{eqn:Mequiv}
 M(g\cdot f) = \abs{\det g}^d (g^T)^{-1} Mf
\end{equation} 
for $f \in \Cp$ and $g \in G$, where $g^T$ is the transpose matrix,
and $d=1$ if $F\ne \bbC$ and $d=2$ if $F=\bbC$ (i.e., $\abs{\det g}^d$ is
the normalized absolute value of $\det g$).

\section{$F$ non-Archimedean} \label{sect:nonarch}
In this section we consider the case when $F$ is a non-Archimedean 
local field. Let 
$\eO$ the ring of integers, $\fp$ the maximal ideal, $\varpi$ a uniformizer, 
and $\bbF_q$ the residue field of $F$.

The main result of this section is Theorem~\ref{thm:nonarch}.
In order to state the theorem, we must first define a new operator
$A_\beta : \Cm \to \Cp$, which is done in \S\ref{sect:A}. 

\subsection{$K$-finite functions} The action of $G$ on $F^n \setminus \{0\}$
is continuous and transitive. 
Since $F$ is non-Archimedean, $K$ is an open subgroup of $G$, and 
we have the following description of $K$-finite functions. 

\begin{lem} \label{lem:Ksmooth}
A $C^\infty$ function $\varphi$ on $F^n \setminus \{0\}$ is $K$-finite
if and only if there exists an open subgroup $H \subset K$ 
such that $\varphi(h \xi)=\varphi(\xi)$ for all $h \in H$ and 
$\xi \in F^n \setminus \{0\}$.
\end{lem}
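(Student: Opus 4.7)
I would prove the two directions separately. The reverse direction is immediate from compactness: if $\varphi$ is fixed by an open subgroup $H \subset K$, then $K/H$ is a finite set (since $K$ is compact), and the orbit $K \cdot \varphi$ depends only on the coset $kH$, so it is finite and in particular spans a finite-dimensional subspace of $\C$.

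For the forward direction, I would set $V := \mathrm{span}_E(K\cdot\varphi)$ and choose a basis $\varphi_1=\varphi,\varphi_2,\ldots,\varphi_m$ of $V$. The central step is to find points $\xi_1,\ldots,\xi_m \in F^n\setminus\{0\}$ such that the matrix $(\varphi_i(\xi_j))_{1\le i,j\le m}$ is invertible; this follows from a standard linear-algebra argument, since if no such points existed then every $m\times m$ minor of the infinite evaluation matrix $(\varphi_i(\xi))_{i,\xi}$ would vanish, producing a nontrivial relation $\sum c_i\varphi_i \equiv 0$ that contradicts linear independence of the $\varphi_i$'s.

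With such $\xi_j$'s fixed, for each $k \in K$ I can write $k\cdot\varphi = \sum_{i=1}^m a_i(k)\varphi_i$, and evaluating at $\xi_j$ recovers the vector $(a_i(k))_i$ from $(\varphi(k^{-1}\xi_j))_j$ by applying the inverse of $(\varphi_i(\xi_j))$. Since $\varphi$ is locally constant (that is what $C^\infty$ means in the non-Archimedean setting) and the map $k \mapsto k^{-1}\xi_j$ is continuous, each coefficient function $a_i : K \to E$ is locally constant on $K$. Hence there is an open neighborhood $U$ of $e \in K$ on which $a_i(h) = a_i(e) = \delta_{i,1}$, and consequently $h\cdot\varphi = \varphi$ for all $h \in U$. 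Because $K = \GL_n(\eO)$ is totally disconnected and the principal congruence subgroups form a neighborhood basis of $e$, the open set $U$ contains an open subgroup $H \subset K$, which is the subgroup required.

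The only real content is the evaluation-matrix step; once that is in place, the conclusion follows formally from local constancy of $\varphi$ and compactness of $K$. I do not expect any serious obstacle.
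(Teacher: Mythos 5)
Your proof is correct, but it takes a slightly different route from the paper's, so it is worth comparing. The paper also reduces to the finite-dimensional span $W$ of the $K$-translates of $\varphi$, but instead of evaluation points it chooses a compact open subset $X \subset F^n\setminus\{0\}$ on which the restriction map $W \to C^\infty(X)$ is injective, and then invokes the fact that a locally constant function on the compact set $X$ is invariant under a sufficiently small open subgroup of $K$ (uniform local constancy via compactness of $X$), so that $h\cdot\varphi$ and $\varphi$ agree on $X$ and hence coincide. You replace the compact open set by finitely many points $\xi_1,\dotsc,\xi_m$ with invertible evaluation matrix $(\varphi_i(\xi_j))$ -- your minor/rank argument for their existence is the standard one and is fine -- and then observe that the matrix coefficients $a_i(k)$ are fixed linear combinations of the locally constant functions $k\mapsto\varphi(k^{-1}\xi_j)$, so they are locally constant on $K$; this needs local constancy of $\varphi$ only at finitely many points and no uniformity, at the cost of the extra linear-algebra step and the final remark that a neighborhood of $e$ in $\GL_n(\eO)$ contains an open subgroup (which the paper's version also implicitly uses). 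Your treatment of the converse (finiteness of $K/H$ for $H$ open, with $k\cdot\varphi$ depending only on the coset $kH$) is the same immediate observation the paper leaves unstated. Both arguments are complete; yours is marginally more self-contained, the paper's is shorter because compactness of $X$ packages the uniformity for the whole finite-dimensional space at once.
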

\begin{proof}
Let $W$ denote the span of the $K$ translates of $f$. By assumption 
$W$ is finite dimensional, and this implies that there exists a compact
open subset $X$ of $F^n \setminus \{0\}$ such that the restriction 
map $W \to C^\infty(X)$ is injective. Any locally constant 
function on $X$ is fixed by an open subgroup of $K$, which proves
the lemma. 
\end{proof}

One may sometimes wish to consider the group $\SL_n(F)$ rather than 
$\GL_n(F)$ acting on $F^n \setminus \{0\}$. The next lemma shows that 
this does not change the corresponding subspaces of invariant functions in $\C$.

\begin{lem} \label{lem:SLGL}
For an integer $r>0$, set $K_r:= \ker( \GL_n(\eO) \to \GL_n(\eO/\fp^r) )$.
Then the following properties of a function $\varphi$ on $F^n \setminus\{0\}$ are equivalent
for $n\ge 2$: 

(i) $\varphi$ is stabilized by $K_r\cap \SL_n(F)$,

(ii) $\varphi(\xi') = \varphi(\xi)$ for $\xi,\xi'\in F^n\setminus \{0\}$ satisfying $v(\xi'-\xi) \ge v(\xi) + r$,

(iii) $\varphi$ is stabilized by $K_r$.
\end{lem}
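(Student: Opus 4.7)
The plan is to establish the cycle (iii) $\Rightarrow$ (i) $\Rightarrow$ (ii) $\Rightarrow$ (iii). The first implication is immediate from $K_r \cap \SL_n(F) \subset K_r$. For (ii) $\Rightarrow$ (iii), I would note that any $h \in K_r$ has $h - I$ with entries in $\varpi^r \eO$, so $\norm{h\xi - \xi} \le q^{-r}\norm{\xi}$, i.e., $v(h\xi - \xi) \ge v(\xi) + r$; applying (ii) to the pair $(\xi, h\xi)$ yields $\varphi(h\xi) = \varphi(\xi)$.

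The substance is (i) $\Rightarrow$ (ii). I would reduce it to the orbit statement: given $\xi, \xi' \in F^n \setminus \{0\}$ with $v(\xi' - \xi) \ge v(\xi) + r$, there exists $g \in K_r \cap \SL_n(F)$ with $g\xi = \xi'$. Granting this, (i) gives $\varphi(\xi') = \varphi(g\xi) = \varphi(\xi)$. The main obstacle is juggling three constraints on $g$ simultaneously: proximity to the identity mod $\varpi^r$, determinant one, and the prescribed action on $\xi$. It is here that the hypothesis $n \ge 2$ must enter.

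I would construct $g$ as a product $g_2 g_1$. Write $\xi' = \xi + y$ with $\norm{y} \le q^{-r}\norm{\xi}$, choose $i$ with $|\xi_i| = \norm{\xi}$, and let $y'$ agree with $y$ outside the $i$-th coordinate and satisfy $y'_i = 0$. Define $g_1$ by $g_1 x := x + (x_i/\xi_i)\, y'$; then $g_1 \xi = \xi + y'$, and since $y'_i = 0$ one computes $\det g_1 = 1$, while the bounds $|y_j/\xi_i| \le q^{-r}$ place $g_1$ in $K_r \cap \SL_n(F)$.

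It remains to adjust only the $i$-th coordinate, by $y_i$. Using $n \ge 2$, pick $k \ne i$ and seek $g_2$ equal to the identity outside the $\{i,k\}$-block. A short $2 \times 2$ check shows one may take the $(i,i)$-entry of $g_2$ to be $a := 1 + y_i/\xi_i$, the $(k,k)$-entry $a^{-1}$, the $(i,k)$-entry $0$, and the $(k,i)$-entry $c := (\xi_k + y_k)(1 - a^{-1})/\xi_i$. The estimates $|a - 1|, |a^{-1} - 1|, |c| \le q^{-r}$ (the last using $|\xi_k + y_k| \le \norm{\xi} = |\xi_i|$) place $g_2$ in $K_r \cap \SL_n(F)$, and the choice of $c$ ensures $g_2(\xi + y') = \xi + y$. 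Then $g := g_2 g_1$ does the job.
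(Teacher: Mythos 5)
Your proof is correct, and its skeleton matches the paper's: the content of (i) $\Rightarrow$ (ii) is, in both cases, the transitivity statement that some $g \in K_r \cap \SL_n(F)$ carries $\xi$ to $\xi'$ whenever $v(\xi'-\xi)\ge v(\xi)+r$, and the remaining implications are exactly the easy norm estimate you give (the paper dismisses them as easy). Where you differ is in how $g$ is produced. The paper extends the primitive vectors $\varpi^{-v(\xi)}\xi$ and $\varpi^{-v(\xi'-\xi)}(\xi'-\xi)$ to a basis $v_1,\dotsc,v_n$ of $\eO^n$ and uses the single transvection $v_1 \mapsto v_1 + \varpi^{v(\xi'-\xi)-v(\xi)}v_2$, which is shorter and coordinate-free; you stay in standard coordinates and compose a rank-one shear $g_1$ (determinant $1$ because the $i$-th entry of the added column vanishes) with a $2\times 2$ block $g_2$ chosen to have determinant $1$ while adjusting only the $i$-th coordinate, and your estimates $\abs{a-1},\abs{a^{-1}-1},\abs{c}\le q^{-r}$ all check out, so $g=g_2g_1$ indeed lies in $K_r\cap\SL_n(F)$. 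One thing your version buys: it works uniformly even when $\xi'-\xi$ is congruent to a multiple of $\xi$ modulo $\fp$ (e.g.\ $\xi'=(1+\varpi^r)\xi$), in which case the two primitive vectors above do not extend to a common basis of $\eO^n$ and the paper's construction as written needs a separate (easy) remark; the price is a somewhat longer explicit computation.
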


\begin{proof}
Suppose that $\varphi$ is stabilized by $K_r \cap G$. 
Take $\xi,\xi' \in F^n\setminus\{0\}$ with $v(\xi'-\xi) \ge v(\xi)+r$.
We can find a basis $v_1,\dotsc,v_n$ of $\eO^n$ with $v_1 = \varpi^{-v(\xi)}\xi$
and $v_2 = \varpi^{-v(\xi'-\xi)}(\xi'-\xi)$. 
Let $g$ send $v_1$ to $v_1+\varpi^{v(\xi'-\xi) - v(\xi)} v_2$ 
and $v_k$ to $v_k$ for $k>1$. 
Then $g \in K_r \cap G$ and $g \xi = \xi'$. Thus $\varphi(\xi')=\varphi(\xi)$.
This proves (i) implies (ii). 
The other implications are easy.
\end{proof}

\subsection{The operator $A_\beta : \Cm \to \Cp$} \label{sect:A}
Let $\eS'_b(F)$ denote the space of distributions $\beta$ on $F$ such that for any open subgroup $U \subset \eO^\times$,
the multiplicative $U$-average\footnote{The multiplicative $U$-average $\beta_U$ is defined
by $\beta_U(t) = \frac 1{\on{mes}(U)} \int_U \beta(u t) d^\times u$.} 
$\beta_U$ has compact support and $\brac{\beta_U,1}=0$.
Note that if $\brac{\beta_U,1}=0$ for some $U$, then it is true for all $U$.

\subsubsection{}
We would like to define $A_\beta : \Cm \to \Cp$ for $\beta \in \eS'_b(F)$
by \[ (A_\beta \varphi)(x) = \int_{F^n} \beta(\xi \cdot x) \varphi(\xi)d\xi \] 
but we must explain the meaning of the r.h.s. 

Fix $\varphi\in \Cm$ and $x \in F^n \setminus \{0\}$. 
For any open compact subgroup $\Lambda \subset F^n$ let 
\[ I(\Lambda) := \int_{\Lambda} \beta (\xi\cdot x)\varphi (\xi) d\xi. \] 

\begin{lem} \label{lem:AI}
There exists $\Lambda$ such that 
$I(\Lambda') = I(\Lambda)$ for any $\Lambda'$ containing $\Lambda$. 
\end{lem}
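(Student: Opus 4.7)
My plan is to take $\Lambda := \varpi^{-N}\eO^n$ with $N$ sufficiently large and verify $I(\Lambda') = I(\Lambda)$ for every open compact $\Lambda' \supset \Lambda$. By Lemma~\ref{lem:Ksmooth} there is an integer $r_0$ with $\varphi$ stabilized by $K_{r_0}$. Set $U := 1 + \fp^{r_0} \subset \eO^\times$, so that scalar multiplication by $U$ preserves $\varphi$ and every $\eO$-lattice in $F^n$. By hypothesis $\beta_U$ has compact support contained in some ball $\{|t| \le q^C\}$, with $\brac{\beta_U, 1} = 0$. Fix any $y \in F^n$ with $y \cdot x = 1$ and choose $N \ge r_0 + C + \log_q \norm{y}$. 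For definiteness I treat $\Lambda'$ as an $\eO$-sublattice, so that $U$ preserves it automatically; the general open compact case is handled by shrinking $U$ to a smaller subgroup stabilizing $\Lambda'$, possible because $\Lambda'$ is bounded.

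The substitution $\xi \mapsto u\xi$ in the integrals followed by averaging over $u \in U$ uses the $U$-invariance of $\Lambda$, $\Lambda'$, and $\varphi$ to rewrite
\[ I(\Lambda') - I(\Lambda) = \int_{\Lambda' \setminus \Lambda} \beta_U(\xi \cdot x)\,\varphi(\xi)\, d\xi = \brac{\beta_U, h}, \]
where
\[ h(t) := \int_{\xi \cdot x = t,\ \xi \in \Lambda' \setminus \Lambda} \varphi(\xi)\, d\mu_x(\xi) \]
is smooth and compactly supported on $F$. Since $\beta_U$ has compact support, this pairing depends only on the values of $h$ near the ball $\{|t| \le q^C\}$. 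The key claim is that $h$ is \emph{constant} on this ball, for then $\brac{\beta_U, h} = h(0)\,\brac{\beta_U, 1} = 0$ and the lemma follows.

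To prove the claim I would take $t, t' \in F$ with $|t|, |t'| \le q^C$, set $s := t' - t$ (so $|s| \le q^C$ by the ultrametric inequality), and study the translation $\tau_s \colon \xi \mapsto \xi + sy$. Three properties should line up. First, $\tau_s$ bijects $\{\xi \cdot x = t\}$ onto $\{\xi \cdot x = t'\}$ and preserves $d\mu_x$. Second, the vector $sy$ lies in $\Lambda$ because $|s|\norm{y} \le q^{C + \log_q \norm{y}} \le q^N$, so $\tau_s$ preserves both $\Lambda$ and $\Lambda'$, and hence $\Lambda' \setminus \Lambda$. Third, every $\xi \in \Lambda' \setminus \Lambda$ has $\norm{\xi} > q^N$, and the estimate $\norm{sy} \le q^C \norm{y} \le q^{N-r_0} < q^{-r_0}\norm{\xi}$ combined with Lemma~\ref{lem:SLGL}(ii) yields $\varphi(\xi + sy) = \varphi(\xi)$. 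Together these give $h(t) = h(t')$. The main obstacle is arranging a single $N$ that makes all three inequalities compatible, which is exactly what the lower bound $N \ge r_0 + C + \log_q \norm{y}$ accomplishes.
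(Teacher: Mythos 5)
Your argument is essentially the paper's, repackaged: you average $\beta$ over a subgroup $U$ fixing $\varphi$ to get a compactly supported $\beta_U$ with $\brac{\beta_U,1}=0$, use Lemma~\ref{lem:SLGL}(ii) to show $\varphi$ is unchanged by the translations $\xi \mapsto \xi + sy$ (with $y\cdot x = 1$ and $s$ running over a ball containing $\supp\beta_U$) on the region outside $\Lambda$, and conclude from $\brac{\beta_U,1}=0$; the paper organizes the same mechanism via the decomposition $F^n = F\xi_0 \oplus H$ and Fubini rather than your pushforward $h$, and its adapted $\Lambda$ plays the role of your $\varpi^{-N}\eO^n$. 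For $\eO$-lattices $\Lambda'$ (indeed for any $\Lambda'$ stable under scalar multiplication by $U$) your verification is complete and correct, and the bookkeeping $N \ge r_0 + C + \log_q\norm{y}$ does exactly what you say.

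The one place to be careful is your parenthetical claim that general open compact $\Lambda'$ are handled "by shrinking $U$." If you replace $U$ by a smaller $U'$ stabilizing $\Lambda'$, you must also replace $\beta_U$ by $\beta_{U'}$, and the support of $\beta_{U'}$ need not lie in that of $\beta_U$ (averaging over the larger group can produce cancellation), so the constant $C$ becomes a constant $C'$ depending on $\Lambda'$; since $N$ was fixed before $\Lambda'$ was given, the inequality $N \ge r_0 + C' + \log_q\norm{y}$ may fail and your constancy claim for $h$ only covers the ball of radius $q^C$. So that patch is circular as stated. This is not a defect relative to the paper: the paper's own step "we can replace $\beta$ by $\beta_U$" inside $I(\Lambda')$ likewise presupposes that $\Lambda'$ is stable under homotheties by $U$ (automatic for $\eO$-lattices, which is the essential case), so both arguments really treat the same class of $\Lambda'$. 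One further minor difference: the paper chooses $\xi_0$ with $v(\xi_0) = -v(x)$, which makes the resulting constant independent of $x$; this uniformity is what Corollary~\ref{cor:Afil} later extracts from the proof, and it is lost if $y$ is chosen arbitrarily — harmless for the lemma itself, but worth noting if you intend your proof to feed into that corollary.
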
 
\begin{proof} Choose $\xi_0 \in F^n$ such that $\xi_0 \cdot x =1$ and $v(\xi_0) = -v(x)$. 
Then $F^n = F\xi_0 \oplus H$ where $H$ is the hyperplane $\{ \xi \mid \xi \cdot x = 0\}$.
Lemma \ref{lem:Ksmooth} implies that $\varphi \in \Cm$ is fixed by 
the homothety actions of an open subgroup $U \subset \eO^\times$.
Therefore we can replace $\beta$ by the multiplicative average $\beta_U$. 
Let $\fp^i \subset F$ be a fractional ideal containing the support of $\beta_U$.
Lemma \ref{lem:SLGL}(ii) implies that 
$\varphi(s\xi_0 + \xi) = \varphi(\xi)$ if $s\in \fp^i$ and $v(\xi) \le v(\xi_0)+i-r$,
where $\varphi$ is stabilized by the congruence subgroup $K_r$. Put $a:= r-i$. 
Let $\Lambda := \fp^i \xi_0 \oplus \{\xi \in H \mid v(\xi) \ge -v(x)-a\}$. 

Now suppose $\Lambda'$ is a subgroup containing $\Lambda$. 
Define $\Lambda'' = \{ \xi \in \Lambda' \mid \xi\cdot x \in \fp^i \} \supset \Lambda$.
Then $I(\Lambda') = I(\Lambda'')$ since $\fp^i$ contains the support of $\beta$.
Now $\Lambda'' = \fp^i \xi_0 \oplus (\Lambda'' \cap H)$.
Thus 
\[ I(\Lambda'')-I(\Lambda) = \int_{\xi \in (\Lambda'' \setminus \Lambda)\cap H}
 \int_{\fp^i} \beta_U(s) \varphi(s\xi_0+\xi)\abs{\xi_0} ds d\mu_x. \] 
Note that $\xi \in (\Lambda''\setminus\Lambda)\cap H$ satisfies 
$v(\xi) < -v(x)-a$ and hence $\varphi(s\xi_0+\xi)=\varphi(\xi)$. 
We conclude that $I(\Lambda'')=I(\Lambda)$ since $\brac{\beta_U,1}=0$.
\end{proof}

\subsubsection{} 
Put $(A_\beta \varphi)(x) :=  I(\Lambda)$ where $\Lambda$ is as in Lemma \ref{lem:AI}.

\begin{cor} \label{cor:Afil}
Let $R$ be any number. 
If $\varphi \in \C_{\le -R}$, then $A_\beta \varphi \in \C_{\ge R-a}$,
where $a$ is an integer depending only on $\beta$ and the stabilizer of $\varphi$ in $G$. 
\end{cor}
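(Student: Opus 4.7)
The plan is to read off the integer $a$ and the support estimate directly from the construction in the proof of Lemma~\ref{lem:AI}, and then to verify separately that $A_\beta\varphi$ is $K$-finite. I would set $a := r - i$, where $r$ is any integer with $K_r$ stabilizing $\varphi$ and $i$ is chosen so that $\fp^i$ contains the support of $\beta_U$ for $U := \eO^\times \cap K_r$; both quantities depend only on $\beta$ and on the stabilizer of $\varphi$, and in particular not on the evaluation point $x$.

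For the support bound, fix $x \in F^n \setminus \{0\}$ and take $\xi_0$, $\Lambda$ as in the proof of Lemma~\ref{lem:AI}. Any $\xi = s\xi_0 + \eta \in \Lambda$ satisfies $v(s\xi_0) \ge i - v(x)$ and $v(\eta) \ge -v(x) - a$, so the ultrametric inequality gives
\[ v(\xi) \;\ge\; -v(x) + \min(i,\, i-r) \;=\; -v(x) - a. \]
Hence if $v(x) < R - a$ then $v(\xi) > -R$ for every $\xi \in \Lambda$, and the hypothesis $\varphi \in \C_{\le -R}$ forces $(A_\beta\varphi)(x) = I(\Lambda) = 0$. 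This gives $A_\beta\varphi \in \C_{\ge R-a}$ at the level of supports.

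To see that $A_\beta\varphi$ actually lies in $\C$, I would exploit the equivariance of the pairing $\xi \cdot x$: substituting $x \mapsto gx$ for $g \in K_r$ and changing variables via $\xi \mapsto (g^T)^{-1}\xi$, using $\abs{\det g} = 1$ and the stability of $K_r$ under $g \mapsto (g^T)^{-1}$, shows that $A_\beta\varphi$ is $K_r$-invariant. Lemma~\ref{lem:Ksmooth} then places it in $\C$.

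The heavy lifting has already been done in Lemma~\ref{lem:AI}; the only real thing to check is that the bound $v(\xi) \ge -v(x) - a$ is uniform in $x$, which is built into the explicit description of $\Lambda$. Everything after that is bookkeeping.
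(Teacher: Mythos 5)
Your argument is correct and follows essentially the same route as the paper: both take $a=r-i$ from the proof of Lemma~\ref{lem:AI}, note its independence of $x$, and conclude $(A_\beta\varphi)(x)=I(\Lambda)=0$ for $v(x)<R-a$ because every point of $\Lambda$ has $v>-R$ and so lies outside $\supp\varphi$ (the paper phrases this via the iterated integral over $\fp^i\xi_0\oplus(\Lambda\cap H)$). Your extra verification that $A_\beta\varphi$ is $K_r$-invariant, hence lies in $\C$, is a welcome addition that the paper only asserts implicitly (via the equivariance remark following the corollary), though strictly one should also cite Lemma~\ref{lem:SLGL} for smoothness before applying Lemma~\ref{lem:Ksmooth}.
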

\begin{proof} We use the notation from the proof of Lemma~\ref{lem:AI}. 
Note that the choice of $a$ is independent of $x \in F^n\setminus\{0\}$.
It follows from our definition above and the proof of Lemma~\ref{lem:AI} that
\[ (A_\beta\varphi)(x) = \int_{\substack{\xi \in H\\ v(\xi) \ge -v(x)-a}} 
\int_{\fp^i} \beta_U(t)\varphi(s\xi_0+\xi) \abs{\xi_0} ds d\mu_x, \]
which is zero if $v(x)<R-a$. 
\end{proof}

Thus we have defined an operator $A_\beta : \Cm \to \Cp$.


\begin{rem}
For $\varphi \in \Cm$ we have 
$A_\beta(g\varphi) = \abs{\det g} (g^T)^{-1}(A_\beta\varphi)$
where $g^T$ is the transpose. In other words, the operator 
$\Cm \to \{\text{measures on }(F^n)^*\setminus \{0\}\}$ defined by
$\varphi \mapsto (A_\beta \varphi)dx$ is equivariant with respect to the
action of $G$.
\end{rem}

The goal of this section is to prove the following.
\begin{thm}\label{thm:nonarch}
The operator $M : \Cp \to \Cm$ is an isomorphism. 
The inverse of $M$ is $A_\beta$, where $\beta$ is the compactly supported
distribution on $F$ equal to
	\[ \frac{1-q^{n-1}}{1-q^{-n}} (\abs{s-1}^{-n} - \abs{s}^{-n}). \]
\end{thm}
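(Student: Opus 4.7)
The plan is to prove the theorem by passing through the Fourier transform on $F^n$, which is the natural intermediary between $M$ and the operator $A_\beta$.

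First, I would verify that $\beta$ as given is a compactly supported distribution lying in $\eS'_b(F)$. The individual terms $\abs{s-1}^{-n}$ and $\abs{s}^{-n}$ (regularized as distributions via meromorphic continuation in the exponent from $\Re\lambda > -1$) agree on $\{\abs{s}>1\}$, since there $\abs{s-1}=\abs{s}$ by the ultrametric inequality. Hence $\beta$ is genuinely supported in $\eO$, and in particular every multiplicative average $\beta_U$ is compactly supported. The vanishing $\brac{\beta_U,1}=0$ reduces to $\int_F \beta(s)\,ds=0$ (since $U\subset\eO^\times$ preserves the Haar measure $ds$), which holds because the regularized integrals of $\abs{s}^{-n}$ and of its translate $\abs{s-1}^{-n}$ coincide.

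The heart of the proof is the identity $A_\beta \circ M = \on{id}_{\Cp}$. The key analytic input is the Fourier--slice identity
\[ \int_F \eR f(\xi, t)\,\psi(-\lambda t)\,dt = \widehat{f}(\lambda\xi), \]
which upon Fourier inversion in $t$ and evaluation at $t=1$ yields
\[ Mf(\xi) = \int_F \widehat{f}(\lambda\xi)\,\psi(\lambda)\,d\lambda \]
in the distributional sense. A second input is the local Tate identity $\widehat{\abs{t}^{n-1}}(s) = \tfrac{1-q^{-n}}{1-q^{n-1}}\,\abs{s}^{-n}$ (up to an epsilon factor for $\psi$), which identifies the scalar appearing in $\beta$ as the reciprocal of the local gamma factor $\gamma(n, \psi)$, and realizes $\beta(s)$ as the Fourier transform of $\abs{t}^{n-1}(\psi(t) - 1)$. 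Substituting the integral representation of $Mf$ into $(A_\beta Mf)(y) = \int_{F^n} \beta(\xi\cdot y)\,Mf(\xi)\,d\xi$ and interchanging orders (justified by the compact support of $\beta_U$ and the decay of $\widehat{f}$ for $f\in\Cp$), the composition rearranges to the pairing of $f$ with a one-variable distribution on $F^\times$; the coefficient $(1-q^{n-1})/(1-q^{-n})$ is calibrated precisely so that this distribution collapses to $\delta_{u=1}$, recovering $f(y)$.

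The opposite composition $M\circ A_\beta = \on{id}_{\Cm}$ follows either by a symmetric Fourier computation, or from $A_\beta M = \on{id}_{\Cp}$ together with injectivity of $A_\beta$ (which is itself a consequence of the Fourier identification). The main obstacle will be the careful bookkeeping of distributional interchanges: both $\widehat{\abs{t}^{n-1}}$ and the Fourier representation of $Mf$ exist only through analytic continuation, so every manipulation must be checked in a regularized framework, and extracting the exact constant $(1-q^{n-1})/(1-q^{-n})$ amounts to computing the Tate local zeta integral at the pole $\lambda = n$. The $G$-equivariance of both $M$ and $A_\beta$ (equation \eqref{eqn:Mequiv} and the Remark in \S\ref{sect:A}) ensures that once the Fourier framework is in place, the identity needs only to be verified on one orbit representative, which streamlines the final verification.
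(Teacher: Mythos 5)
Your overall strategy---pass through the Fourier transform and use the Tate/Gel'fand computation of the Fourier transform of $\abs{t}^{n-1}$ to realize $\beta$ as the transform of $\abs{t}^{n-1}(\psi(t)-1)$---is the same skeleton as the paper's proof. But the central manipulation is not legitimate as stated, and the gap is exactly at the point you defer to ``careful bookkeeping.'' For a general $f \in \Cp$ the classical Fourier transform $\hat f$ does not exist: elements of $\Cp$ are smooth $K$-finite functions on $F^n\setminus\{0\}$ with bounded support, they are not defined at $0$ and need not be integrable near $0$, so both your Fourier--slice identity and the representation $Mf(\xi)=\int_F \hat f(\lambda\xi)\psi(\lambda)\,d\lambda$ (and the subsequent interchange producing $\delta_{u=1}$) have no meaning beyond the subspace $\C_c$. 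The paper's actual mechanism for dealing with this is what your plan is missing: it replaces $\hat f$ by the regularized transform $\eF' f(\xi)=\int_{F^n} f(x)(\psi(-\xi\cdot x)-1)\,dx$, where the subtracted $1$ makes the integrand vanish near $x=0$ (since $\psi(-\xi\cdot x)-1=0$ for $\xi\cdot x\in\eO$), proves that $\eF=A_\psi$ and $\eF'$ are mutually inverse by showing both composites shift the filtrations $\C_{\ge R}$, $\C_{\le -R}$ by a bounded amount uniformly over functions with a fixed stabilizer $K_r$ (Proposition~\ref{prop:Mfil}, Corollary~\ref{cor:Afil}), which reduces the inversion to $\C_c$ where it is classical, and then encodes the relation to $M$ and $A_\beta$ purely through convolution: $\eF' f=\alpha * Mf$ and $A_\beta\varphi=\sigma(\alpha)*\eF\varphi$ with $\alpha(t)=\psi(-t)-1$, the latter resting on the identity $\beta=\sigma(\alpha)*\psi$. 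Without some substitute for this regularization-plus-filtration argument, your plan fails at its first step for general $f\in\Cp$, and the ``regularized framework'' you allude to is the real content of the proof rather than bookkeeping.

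Two further points. First, your Tate identity has the constant inverted: the Fourier transform of $\abs{t}^{n-1}$ is $\frac{1-q^{n-1}}{1-q^{-n}}\abs{s}^{-n}$, not $\frac{1-q^{-n}}{1-q^{n-1}}\abs{s}^{-n}$; with your constant, $\beta$ would not be the transform of $\abs{t}^{n-1}(\psi(t)-1)$, so the calibration claim would break. Second, your fallback for $M A_\beta=\id$ via ``injectivity of $A_\beta$, itself a consequence of the Fourier identification'' is unsupported: the identification reads $A_\beta\varphi=\sigma(\alpha)*\eF\varphi$, and injectivity of convolution by $\sigma(\alpha)$ on $\Cp$ is not evident. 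The paper instead obtains $MA_\beta=\id$ by applying $M$ to that identification and using $M(\sigma(\alpha)*f)=\alpha*Mf$ together with $\eF'\eF=\id$. (Also, equivariance only lets you fix the evaluation point $x$, not the function $f$, so it does not reduce the verification to a single computation.)
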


The distributions $\abs{s-1}^{-n}$ and $\abs s^{-n}$ are defined as in \cite[Ch.~2, \S 2.3]{Gelfand-6}, i.e., 
\[ \brac{\abs s^{-n}, f}= \int_F \abs s^{-n} (f(s)-f(0)) ds \]
for a test function $f \in C^\infty_c(F)$.

We prove Theorem~\ref{thm:nonarch} in \S\ref{sect:RadF}.

\begin{rem} Let $\beta$ be as defined in Theorem~\ref{thm:nonarch}.
Then the integral of $\beta$ along any compact open subset of $F$ 
has value in $\bbZ[\frac 1 q]$. This is not true for the distribution 
$\frac{1-q^{n-1}}{1-q^{-n}}\abs{s}^{-n}$.
\end{rem}

\subsection{Fourier transform} \label{sect:fourier}
We assume without loss of generality that $E$ contains all roots of unity. 
Choose a nontrivial additive character $\psi$ of $F$ which is trivial on $\eO$ but
nontrivial on $\varpi^{-1}\eO$. The Haar measure we chose for $F$ is self-dual
with respect to $\psi$. 
Note that $\psi \in \eS'_b(F)$. Define the Fourier transform 
$\eF: \Cm \to \Cp$ by 
\[ \eF := A_\psi. \] 

\noindent
On the other hand, we also have an operator $\eF' : \Cp \to \Cm$ defined by 
\begin{equation} \label{eqn:F'}
	\eF' f(\xi) = \int_{F^n} f(x) (\psi(-\xi\cdot x)-1)dx. 
\end{equation}
Moreover for any number $R$, one observes that $\eF'(\C_{\ge R}) \subset \C_{< -R}$.

\begin{prop}\label{prop:fourier}
	The operators $\eF$ and $\eF'$ are mutually inverse. \end{prop}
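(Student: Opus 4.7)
The plan is to establish both identities $\eF'\eF=\on{id}_{\Cm}$ and $\eF\eF'=\on{id}_{\Cp}$ by direct computation: one unfolds each composition into a double integral, swaps the order of integration, and evaluates the inner kernel via orthogonality of the additive character. I will sketch the first identity in detail; the second is handled symmetrically.

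Fix $\varphi\in\Cm$ and $\eta\in F^n\setminus\{0\}$. My first step is to localize the domain of integration. Since $\eF\varphi\in\Cp$ has bounded support, the $x$-integration in $(\eF'\eF\varphi)(\eta)$ takes place inside some ball $\{\norm{x}\le R\}$; moreover, the factor $\psi(-\eta\cdot x)-1$ in the definition of $\eF'$ vanishes whenever $\eta\cdot x\in\eO$, and in particular on $\{\norm{x}\le\norm{\eta}^{-1}\}$. Thus the effective domain of integration is the compact annulus $A:=\{x:\norm{\eta}^{-1}<\norm{x}\le R\}\subset F^n\setminus\{0\}$, on which $v(x)$ is bounded. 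This boundedness is the key point: it allows me to invoke Lemma~\ref{lem:AI} uniformly to produce a \emph{single} compact open subgroup $\Lambda\subset F^n$ (independent of $x\in A$, and taken large enough to contain $\eta$) such that
\[ (\eF\varphi)(x)=\int_{\Lambda}\psi(\xi\cdot x)\varphi(\xi)\,d\xi\qquad\text{for every }x\in A. \]

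With this uniform truncation, the combined integrand $\varphi(\xi)\psi(\xi\cdot x)(\psi(-\eta\cdot x)-1)$ is bounded on the compact product $\Lambda\times A$, so Fubini applies. Swapping the two integrals and extending the inner $x$-integration back to the full ball $\{\norm{x}\le R\}$ (which costs nothing, since the integrand vanishes on $\{\norm{x}\le\norm{\eta}^{-1}\}$), one is left with
\[ (\eF'\eF\varphi)(\eta)=\int_{\Lambda}\varphi(\xi)\biggl(\int_{\norm{x}\le R}\!\psi((\xi-\eta)\cdot x)\,dx-\int_{\norm{x}\le R}\!\psi(\xi\cdot x)\,dx\biggr)d\xi. \]
By orthogonality of $\psi$, each inner integral evaluates to $\on{vol}(B_R)$ times the indicator of the dual ball $\{\alpha:\norm{\alpha}\le R^{-1}\}$. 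Choosing $R$ large enough so that (i) $\{\norm{\xi}\le R^{-1}\}$ lies inside the neighborhood of $0$ on which $\varphi$ vanishes and (ii) $\varphi$ is constant equal to $\varphi(\eta)$ on $\{\norm{\xi-\eta}\le R^{-1}\}$ (both possible, the latter by local constancy), the second term contributes $0$ and the first contributes $\on{vol}(B_R)\cdot R^{-n}\cdot\varphi(\eta)=\varphi(\eta)$, yielding $\eF'\eF\varphi=\varphi$.

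The main obstacle is the uniform choice of $\Lambda$ that legitimizes Fubini; this is precisely what the bounded range of $v(x)$ on the annulus $A$ combined with Lemma~\ref{lem:AI} secures. The reverse identity $\eF\eF'=\on{id}$ on $\Cp$ is handled by the symmetric procedure: for $f\in\Cp$ and $y\in F^n\setminus\{0\}$, one chooses a single $\Lambda$ (depending on $y$) realizing the regularized outer integral $A_\psi$, verifies absolute integrability of the double integrand (using that the factor $\psi(-\xi\cdot x)-1$ restricts $x$ away from $0$ for $\xi\in\Lambda$, where $f$ is then bounded), applies Fubini, and uses the same orthogonality calculation to localize $x$ near $y$, obtaining $f(y)$; the constant $-1$ subtracted in the definition of $\eF'$ precisely cancels the would-be spurious contribution at $y=0$, which is harmless since $y\ne 0$.
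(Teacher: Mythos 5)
Your argument is essentially correct, but it takes a genuinely different route from the paper. The paper first uses the support-shift properties ($\eF'(\C_{\ge R})\subset\C_{<-R}$ together with Corollary~\ref{cor:Afil}) to reduce both identities to the subspace $\C_c=\Cp\cap\Cm$, where everything becomes the classical Fourier inversion formula for compactly supported locally constant functions (plus the single observation that $\int_\Lambda\psi(\xi\cdot x)\,d\xi=0$ for $x\ne 0$ and $\Lambda$ large). You instead avoid the reduction entirely and reprove inversion by hand on all of $\Cm$ (resp.\ $\Cp$): uniform truncation of the regularized integral, Fubini on a compact product, and character orthogonality. This is more self-contained — you never quote the classical inversion formula — at the cost of the one delicate point you correctly flag: the existence of a single $\Lambda$ valid for all $x$ in the annulus $A$. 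That uniformity does not follow from the bare statement of Lemma~\ref{lem:AI} (which produces $\Lambda$ for each fixed $x$), but it does follow from its proof (or from the explicit formula in the proof of Corollary~\ref{cor:Afil}): the subgroup $\Lambda_x$ constructed there is contained in $\{\xi : v(\xi)\ge -v(x)+c\}$ with $c$ depending only on $\varphi$ and $\psi$, so a lower bound on $\norm{x}$ (which your annulus provides; note only the inner radius matters, so enlarging $R$ later is harmless) bounds all the $\Lambda_x$ inside one fixed compact open subgroup, and Lemma~\ref{lem:AI} then lets you use that single subgroup. With that detail made explicit, together with the routine bookkeeping (taking $R$ a power of $q$, taking $\Lambda$ a ball containing the small ball around $\eta$, and noting that the case $\norm{\eta}^{-1}>R$ is consistent because $\varphi$ then vanishes at $\eta$ by Corollary~\ref{cor:Afil}), your computation gives $\eF'\eF=\id$ and, symmetrically, $\eF\eF'=\id$. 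The paper's reduction buys simplicity — all analysis happens on $\C_c$, where Fubini and inversion are standard — while your version makes the regularization mechanism of $A_\psi$ do the work directly and keeps the proof independent of the classical inversion theorem.
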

\begin{proof}
Proposition~\ref{prop:Mfil} and Corollary~\ref{cor:Afil} imply
that $\eF \eF' (\C_{\ge R}) \subset \C_{\ge R+a}$ and $\eF'\eF(\C_{\le -R}) \subset \C_{\le -R-a}$
on functions stabilized by $K_r$ for a fixed $r>0$. As a consequence, it is
enough to check the equalities $\eF \eF' = \id$ and $\eF' \eF = \id$
on the subspace $\C_c = \Cp \cap\Cm$. 

Let $f \in \C_c$. Then the usual Fourier transform $\hat f$ is a compactly
supported function on $F^n$.
Note that $\eF' f(\xi) = \hat f(\xi) - \hat f(0)$. By the definition of $\eF$, we have
\[ \eF \eF' f(x) = \int_\Lambda (\hat f(\xi)-\hat f(0))\psi(\xi \cdot x) d\xi \]
for any sufficiently large open compact subgroup $\Lambda \subset F^n$.
Since $\hat f$ is compactly supported, the usual Fourier inversion formula implies that
$\int_\Lambda \hat f(\xi) \psi(\xi \cdot x) d\xi = f(x)$ if $\Lambda$ contains
the support of $\hat f$. Since $x$ is nonzero, $\int_\Lambda \psi(\xi \cdot x)d\xi =0$
for $\Lambda$ large enough. Therefore $\eF \eF' f = f$.
\smallskip

In the other direction, let $\varphi \in \C_c$. Then
$\eF \varphi(x) = \hat\varphi(-x)$ is compactly supported on $F^n$. Again the Fourier inversion formula implies that
\[ \eF' \eF \varphi(\xi) = \int_{F^n} \hat\varphi(x)\psi(-\xi \cdot x)dx - 
\int_{F^n} \hat\varphi(x) dx = 
\varphi(\xi) - \varphi(0) = \varphi(\xi). \qedhere \]
\end{proof}

\subsection{Actions on $\C_\pm$} 
For any real number $a$, let $\eA_{\le a}$ be the space of generalized functions  
$\alpha$ on $F^{\times}$ whose support is contained in
$\{ t \in F^\times \mid v(t)\le a \}$. 
Let $\eA_-$ denote the union of all $\eA_{\le a}$ for all $a$. Then $\eA_-$ becomes an algebra 
under convolution using the measure $d^\times t$. 

\subsubsection{} 
We have an action of $\eA_-$ on $\Cm$ defined by 
\[ (\alpha * \varphi )(\xi) = \int_{F^\times} \alpha(t) \varphi(t^{-1}\xi) d^\times t \]
for $\alpha \in \eA_-,\, \varphi \in \Cm$, and $\xi \in F^n\setminus\{0\}$.
One similarly defines $\eA_{\ge a},\, \eA_+$, and an action of $\eA_+$ on $\Cp$.
There is an isomorphism $\sigma : \eA_{\le a} \to \eA_{\ge -a}$ defined by 
\[ \sigma(\alpha)(t) = \alpha(t^{-1}) \abs{t}^{-n}. \] 

\subsubsection{}
We would like to define a multiplicative convolution action of $\eA_+$ on $\eS'_b(F)$ by 
\[ (\wtilde \alpha * \beta)(s) = \int_{F^\times} \wtilde \alpha(t) \beta(t^{-1}s) d^\times t \]
for $\wtilde \alpha \in \eA_+$ and $\beta \in \eS'_b(F)$, but we must explain 
the meaning of this formula as a distribution on $F$.  
Let $\eS(F)$ denote the space of locally constant, compactly
supported functions on $F$.

\begin{lem} \label{lem:schwartz} 
Let $f \in \eS(F)$ and $t \in F^\times$. Then $\int_F \beta(t^{-1} s) f(s) ds = 0$ if $v(t)$ is sufficiently large. 
\end{lem}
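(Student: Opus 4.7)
The plan is to make the substitution $u = t^{-1}s$, which turns the integral into $\abs{t}\brac{\beta, g_t}$ where $g_t(u) := f(tu)$; so it suffices to prove $\brac{\beta, g_t} = 0$ once $v(t)$ is sufficiently large.

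I first produce an open subgroup $U \subset \eO^\times$, depending only on $f$, such that $f(vs) = f(s)$ for every $v \in U$ and every $s \in F$. Since $f \in \eS(F)$, there exist integers $m_0$ and $M_0$ with $f$ invariant under additive translation by $\fp^{m_0}$ and supported in $\fp^{-M_0}$. Taking $U := 1 + \fp^{m_0+M_0}$, a direct check shows that whenever either of $f(s)$ or $f(vs)$ is nonzero (so necessarily $s \in \fp^{-M_0}$), one has $(v-1)s \in \fp^{m_0}$, hence $f(vs) = f(s)$; outside $\fp^{-M_0}$ both sides vanish. Consequently $g_t$ is multiplicatively $U$-invariant for every $t$, and from the definition of the multiplicative $U$-average one obtains $\brac{\beta, g_t} = \brac{\beta_U, g_t}$.

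Now I invoke the hypothesis $\beta \in \eS'_b(F)$: the distribution $\beta_U$ has compact support, contained in some $\fp^{-N_0}$, and $\brac{\beta_U, 1} = 0$. For $v(t) \ge m_0 + N_0$, every $u \in \fp^{-N_0}$ satisfies $tu \in \fp^{m_0}$, on which $f$ is identically $f(0)$. Hence $g_t$ is constant equal to $f(0)$ on $\fp^{-N_0}$, which is a neighborhood of the support of $\beta_U$; this immediately gives $\brac{\beta_U, g_t} = f(0)\brac{\beta_U, 1} = 0$, as desired.

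The main conceptual obstacle is recognizing that the right way to exploit the two-part hypothesis defining $\eS'_b(F)$ is to pair $\beta$ against a function which is multiplicatively invariant under an open subgroup of $\eO^\times$; this converts $\beta$ into the compactly supported $\beta_U$ and makes the vanishing condition $\brac{\beta_U, 1} = 0$ directly usable. The point where both the local constancy and the compact support of $f$ enter essentially is precisely the verification that $f(t\cdot)$ admits such a multiplicative invariance through a subgroup $U$ independent of $t$.
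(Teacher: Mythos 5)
Your proof is correct and follows essentially the same route as the paper's: use multiplicative invariance of $f$ under an open subgroup $U\subset\eO^\times$ to replace $\beta$ by the compactly supported average $\beta_U$, then note that for $v(t)$ large $f(t\cdot)$ is constant on a neighborhood of $\supp\beta_U$, so the pairing reduces to a multiple of $\brac{\beta_U,1}=0$. The only difference is that you spell out the construction of $U$ from the additive invariance and support of $f$, which the paper leaves implicit.
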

\begin{proof}
Since $f\in \eS(F)$, there exists an open subgroup $U\subset \eO^\times$ that stabilizes $f$
under homotheties. Thus we can replace $\beta$ by the multiplicative average $\beta_U$,
which is compactly supported. 
Then $\int_F \beta(t^{-1}s) f(s) ds = \abs t\int_{\supp \beta_U} \beta_U(s) f(ts)ds$. 
If $v(t)$ is large enough such that $f$ is constant on $t(\supp \beta_U)$, the integral
vanishes since $\brac{\beta_U,1} = 0$.
\end{proof}

Define the distribution $\wtilde \alpha * \beta \in \eS'_b(F)$ by putting the value
at $f \in \eS(F)$ to be 
\[ \brac{\wtilde \alpha * \beta, f} = \int_{F^\times} \wtilde \alpha(t) 
\left( \int_F \beta(t^{-1} s) f(s) ds \right) d^\times t, \]
which is well-defined by Lemma~\ref{lem:schwartz} and the fact that $\wtilde \alpha \in \eA_+$.

\begin{rem} \label{rem:*filt}
Observe that $\eA_{\le a} * \C_{\le R} \subset \C_{\le R+a}$ 
and $\eA_{\ge a} * \C_{\ge R} \subset \C_{\ge R+a}$ for any numbers $a$ and $R$.
Moreover if $\wtilde \alpha \in \eA_{\ge a}$ and $\beta \in \eS'_b(F)$ has support contained in 
$\fp^i$, then the support of $\wtilde \alpha * \beta$ is contained in $\fp^{a+i}$.
\end{rem}

\begin{rem}
The convolution action of $\eA_+$ on $\eS'_b(F)$ is indeed an action, i.e., 
$\wtilde \alpha_1 * (\wtilde \alpha_2 * \beta) = 
(\wtilde \alpha_1 * \wtilde \alpha_2) * \beta$ for $\wtilde \alpha_1,\wtilde \alpha_2
\in \eA_+$ and $\beta \in \eS'_b(F)$. One sees this by restricting $\beta$
to $F^\times$ and identifying $\eA_+$ with the space of distributions on 
$F^\times$ with bounded support using the measure $d^\times t$.
\end{rem}

\begin{lem} \label{lem:A*}
Let $\alpha \in \eA_-$, $\beta \in \eS'_b(F)$, and $\varphi \in \Cm$. Then 
\[ A_\beta(\alpha * \varphi) = \sigma(\alpha) * A_\beta\varphi 
= A_{\sigma(\alpha)*\beta}(\varphi) \] 
\end{lem}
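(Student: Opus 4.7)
The plan is to verify both equalities by direct manipulation using Fubini and the change of variables $\xi \mapsto t\xi$ on $F^n$, after reducing to absolutely convergent integrals via appropriate truncations. The key motivating observation is that the definition $\sigma(\alpha)(t) = \alpha(t^{-1})\abs{t}^{-n}$ is tailored precisely so that the Jacobian $\abs{t}^n$ produced by the substitution $\xi = t\eta$ on $F^n$ is absorbed when we simultaneously substitute $s = t^{-1}$ in the convolution defining $\sigma(\alpha)*A_\beta\varphi$.

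For the first equality, I would expand
\[ A_\beta(\alpha*\varphi)(x) = \int \beta(\xi\cdot x) \int \alpha(t)\, \varphi(t^{-1}\xi)\,d^\times t\,d\xi, \]
swap the order of integration, and substitute $\eta = t^{-1}\xi$ using $\beta(\xi \cdot x) = \beta(\eta \cdot tx)$ to obtain $\int \alpha(t)\abs{t}^n A_\beta\varphi(tx)\,d^\times t$. On the other hand, substituting $s = t^{-1}$ in $(\sigma(\alpha)*A_\beta\varphi)(x) = \int \sigma(\alpha)(s) A_\beta\varphi(s^{-1}x)\,d^\times s$ yields exactly the same expression. For the second equality, I would expand $A_{\sigma(\alpha)*\beta}\varphi(x)$ using the distributional formula for $\sigma(\alpha)*\beta$ from the previous subsection, swap, perform the analogous change of variables, and combine with the elementary identity $A_\beta(t^{-1}\cdot\varphi)(x) = \abs{t}^{-n}A_\beta\varphi(t^{-1}x)$ to recover $\sigma(\alpha)*A_\beta\varphi$.

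The main obstacle is justifying the Fubini swaps, since $A_\beta$ is only the conditionally convergent integral of Lemma~\ref{lem:AI} and $\sigma(\alpha)*\beta$ is a distribution. My plan is to fix $x$, replace $\beta$ by its multiplicative average $\beta_U$ for an open $U \subset \eO^\times$ small enough to stabilize both $\varphi$ and $\alpha*\varphi$ under homotheties (so that $\beta_U$ has compact support), and then choose a single compact open subgroup $\Lambda \subset F^n$ large enough that the defining integrals for $A_\beta\varphi$, $A_\beta(\alpha*\varphi)$, and $A_{\sigma(\alpha)*\beta}\varphi$ all simultaneously stabilize in the sense of Lemma~\ref{lem:AI}. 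A uniform such $\Lambda$ exists because Corollary~\ref{cor:Afil} and Remark~\ref{rem:*filt} give uniform support bounds in $\C_\pm$, and because $\alpha \in \eA_{\le a}$ for some $a$ forces the $t$-integration to run over the compact set $\{v(t)\le a\}$ once $\varphi$ has fixed support. After these reductions everything in sight is an absolutely convergent integral over $\Lambda \times \{v(t)\le a\}$, and ordinary Fubini applies to deliver both identities.
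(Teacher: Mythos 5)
Your proposal is correct and follows essentially the same route as the paper: both reduce to absolutely convergent integrals using Corollary~\ref{cor:Afil} and Remark~\ref{rem:*filt} (the paper truncates $\alpha$ and $\varphi$ to compactly supported objects, you instead truncate the integration domains after averaging $\beta$ over $U$), then apply Fubini with the substitutions $\xi\mapsto t\xi$ and $t\mapsto t^{-1}$, the definition of $\sigma$ absorbing the Jacobian $\abs{t}^n$, and the identity $\sigma(\alpha)*A_\beta\varphi=A_{\sigma(\alpha)*\beta}\varphi$ coming from unwinding the definitions. One small slip in wording: $\{v(t)\le a\}$ is not itself compact; the effective $t$-range is compact only because the support condition on $\varphi$ together with $\xi\in\Lambda$ bounds $\abs{t}$ from above, which is what your parenthetical ``once $\varphi$ has fixed support'' is implicitly using.
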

\begin{proof}
By Corollary~\ref{cor:Afil} and Remark~\ref{rem:*filt}, we reduce to the case 
where $\alpha \in \eA_- \cap \eA_+$ and $\varphi \in \C_c$. 
Consequently, $\alpha * \varphi \in \C_c$. Fix $x \in F^n \setminus \{0\}$. 
We have
\[ A_\beta(\alpha * \varphi)(x) = \int_{F^n}
\beta(\xi\cdot x) \int_{F^\times} \alpha(t) \varphi(t^{-1}\xi) d^\times t d\xi
= \int_{F^\times} \alpha(t)\abs{t}^n \int_{F^n} \beta(\xi \cdot tx)\varphi(\xi) d\xi d^\times t.
\]
by a change of variables.
Substituting $t$ with $t^{-1}$ in the last integral shows that 
$A_\beta(\alpha * \varphi) = \sigma(\alpha) * A_\beta\varphi$. One observes that
$\sigma(\alpha)*A_\beta\varphi = A_{\sigma(\alpha)*\beta}(\varphi)$ essentially by 
definition.
\end{proof}

\begin{rem}\label{rem:M*} 
One easily checks that if $\alpha \in \eA_-$ and $f\in \Cp$, then 
$M(\sigma(\alpha)* f) = \alpha* Mf$. 
\end{rem}

\subsection{Relation between Radon and Fourier transforms} \label{sect:RadF}
Note that $\eF'$ and $M$ are both operators $\Cp \to \Cm$. 
Comparing formulas
\eqref{eqn:M} and \eqref{eqn:F'}, we deduce the formula
\begin{equation} \label{eqn:F'R}
	\eF' f = \alpha * Mf
\end{equation}
where $f \in \Cp$ and $\alpha(t):= \psi(-t)-1$ for $t \in F^\times$. 

Let $\beta$ be the distribution defined in Theorem~\ref{thm:nonarch}. 

\begin{lem} \label{lem:keybeta}
	We have an equality of distributions
\[ \beta = \sigma(\alpha) * \psi. \]
\end{lem}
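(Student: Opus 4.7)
The plan is to verify the distributional identity by pairing both sides with an arbitrary test function $f \in \eS(F)$. Applying the definition of the multiplicative convolution given before the lemma and substituting $u := t^{-1}$ (which preserves $d^\times t$ and rewrites $\sigma(\alpha)(u^{-1}) = \alpha(u)\abs u^n = (\psi(-u)-1)\abs u^n$), one obtains
\[
\brac{\sigma(\alpha) * \psi, f} \;=\; \int_F (\psi(-u)-1)\tilde f(u)\abs u^{n-1}\,du,
\]
where $\tilde f(u) := \int_F f(s)\psi(us)\,ds$. The integrand is absolutely integrable: since $\psi$ is trivial on $\eO$, the factor $\psi(-u)-1$ vanishes on $\eO$ and cancels the behavior of $\abs u^{n-1}$ near $u = 0$, while $\tilde f \in \eS(F)$ controls the behavior at infinity.

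Setting $f_1(s) := f(s+1)$, a simple change of variable yields $\widetilde{f_1}(u) = \psi(-u)\tilde f(u)$, and similarly $\brac{\abs{s-1}^{-n}, f} = \brac{\abs s^{-n}, f_1}$ in the Gelfand regularization. Splitting $(\psi(-u)-1)\tilde f(u) = \widetilde{f_1}(u) - \tilde f(u)$ thus reduces the lemma to the single identity
\[
\int_F \tilde g(u)\abs u^{n-1}\,du \;=\; \frac{1-q^{n-1}}{1-q^{-n}}\,\brac{\abs s^{-n}, g} \qquad (g \in \eS(F)),
\]
applied with $g = f_1$ and $g = f$ and subtracted. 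This is essentially a form of the local Tate functional equation in the self-dual setting, but I would prove it directly.

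Both sides of this identity are $E$-linear in $g$, so it suffices to verify it on the spanning set of characteristic functions $g = \mathbf{1}_{a+\varpi^m\eO}$, $a \in F$, $m \in \bbZ$. A direct calculation gives $\tilde g(u) = \psi(ua)\, q^{-m}\, \mathbf{1}_{\{v(u)\ge -m\}}$, and decomposing the $u$-integral along annuli $\varpi^k\eO^\times$ reduces everything to the standard evaluation
\[
\int_{\eO^\times}\psi(bv)\,dv \;=\; \begin{cases}1-q^{-1}, & v(b)\ge 0,\\ -q^{-1}, & v(b)=-1,\\ 0, & v(b)\le -2.\end{cases}
\]
The main obstacle is the bookkeeping of the constant: the single boundary term at $k = -v(a)-1$ with coefficient $-q^{-1}$ must combine precisely with the geometric sum over $k\ge -v(a)$ with coefficient $1-q^{-1}$, so that the numerator telescopes to $1-q^{n-1}$ and produces the claimed factor. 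The two cases $a \notin \varpi^m\eO$ (where $g(0) = 0$) and $a \in \varpi^m\eO$ (where $g(0) = 1$ and the Gelfand regularization contributes a compensating geometric series) require slightly different verifications but yield the same constant $\frac{1-q^{n-1}}{1-q^{-n}}$.
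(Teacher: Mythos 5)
Your proposal is correct, and it differs from the paper only in how the key analytic input is obtained. The opening reduction is the same as the paper's: pairing $\sigma(\alpha)*\psi$ with $f\in\eS(F)$ and substituting $u=t^{-1}$ turns the left side into the pairing of $f$ against the Fourier transform of the distribution $\abs{u}^{n-1}(\psi(-u)-1)$, and the translation trick $f_1(s)=f(s+1)$ (equivalently, that the Fourier transform of $\abs{u}^{n-1}\psi(-u)$ is the translate by $1$ of that of $\abs u^{n-1}$) handles the $\abs{s-1}^{-n}$ term. At that point the paper simply cites Gel'fand--Graev--Pyatetskii-Shapiro for the formula that the Fourier transform of $\abs u^{n-1}$ is $\frac{1-q^{n-1}}{1-q^{-n}}\abs s^{-n}$, whereas you prove this identity from scratch by linearity on indicator functions of balls $a+\varpi^m\eO$, using the standard evaluation of $\int_{\eO^\times}\psi(bv)\,dv$ and summing geometric series; I checked the bookkeeping in both cases ($a\in\varpi^m\eO$ and $a\notin\varpi^m\eO$) and the constant $\frac{1-q^{n-1}}{1-q^{-n}}$ does come out as you claim. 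What your route buys is self-containedness: the lemma no longer rests on a citation, and the origin of the constant (the unramified Tate gamma factor for the character $\abs\cdot^{n}$, with $\psi$ of conductor $\eO$) is made explicit; the cost is a page of elementary but fiddly computation that the paper avoids. One small imprecision: your convergence remark near $u=0$ is misplaced --- since $n\ge 2$ the factor $\abs u^{n-1}$ is already bounded there, and since $\tilde f$ is compactly supported (Schwartz--Bruhat) the integrand is compactly supported and bounded, so both split integrals converge trivially; the vanishing of $\psi(-u)-1$ on $\eO$ is not needed for this. This does not affect the validity of the argument.
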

\begin{proof}
Let $f \in \eS(F)$. Then $\brac{\sigma(\alpha)*\psi, f} = 
\int_{F^\times} \abs{t}^n (\psi(t)-1) \left(\int_F f(s)\psi(-ts) ds \right) d^\times t$.
This is the value at $f$ of the Fourier transform of $\abs{t}^{n-1} (\psi(t)-1)$
considered as a distribution on $F$. It is well-known \cite[Ch.~2, \S 2.5-6]{Gelfand-6} that 
the Fourier transform of $\abs{t}^{n-1}$ is 
$\frac{1-q^{n-1}}{1-q^{-n}} \abs{s}^{-n}$. Therefore we conclude 
that $\sigma(\alpha)*\psi = \beta$.
\end{proof}

Observe that $\eF=A_\psi$ and $A_\beta$ are both operators $\Cm \to \Cp$. 
Let $\varphi \in \Cm$.
From Lemmas~\ref{lem:A*} and \ref{lem:keybeta}, we deduce the equality 
\begin{equation}\label{eqn:M'F}
	A_\beta\varphi = \sigma(\alpha)* \eF \varphi. 
\end{equation}

\begin{proof}[Proof of Theorem \ref{thm:nonarch}]
We deduce from \eqref{eqn:F'R} and Proposition~\ref{prop:fourier} that
$M$ has a left inverse sending $\varphi\in \Cm$ to 
$\eF(\alpha*\varphi)$. Lemma~\ref{lem:A*} and \eqref{eqn:M'F} together say that 
$\eF(\alpha*\varphi)= A_\beta\varphi$.
Applying $M$ to \eqref{eqn:M'F} and using Remark~\ref{rem:M*}, 
we see that $MA_\beta = \eF'\eF = \id$. Therefore $A_\beta$ is both left and right
inverse to $M$.
\end{proof}

\subsection{Comparison with \v{C}ernov's Radon inversion formula} 
\label{sect:chernov}
Let $f$ be a Schwartz (i.e., compactly supported $C^\infty$) function on $F^n$.
Recall that the Radon transform $\eR f(\xi,s)$ is a $C^\infty$ function on 
$(F^n \setminus \{0\}) \xt F$ (in particular
it is defined at $s=0$), and $\eR f(\xi,s)=0$ if $\norm{s\xi}^{-1}$ is sufficiently large.
The following ``non-archimedean Cavalieri's condition'' is also well-known:
\begin{lem} \label{lem:cavalieri} 
	The integral $\int_F \eR f(\xi,s) ds$ does not depend on $\xi$.
\end{lem}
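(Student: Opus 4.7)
The plan is to unfold the definition of $\eR f(\xi,s)$ and apply Fubini's theorem to recognize the double integral as $\int_{F^n} f(x) dx$, which is manifestly independent of $\xi$.

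More precisely, I would first recall from \S\ref{sect:radon} that
\[ \eR f(\xi,s) = \int_{\xi \cdot x = s} f(x) d\mu_\xi, \]
where $d\mu_\xi$ is the measure on the hyperplane $\xi \cdot x = s$ characterized by the relation $d\mu_\xi \, ds = dx$ on $F^n$. Thus
\[ \int_F \eR f(\xi,s) ds = \int_F \left( \int_{\xi \cdot x = s} f(x) d\mu_\xi \right) ds. \]
Since $f$ is compactly supported and $C^\infty$, the integrand is absolutely integrable, so by the definition of $d\mu_\xi$ together with Fubini's theorem, the iterated integral on the right equals $\int_{F^n} f(x) dx$. This last quantity is independent of $\xi$, proving the claim.

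The only point requiring a moment's care is the justification of Fubini, but since $f$ is Schwartz (hence in $L^1(F^n)$) and the fiber measures $d\mu_\xi$ are defined precisely so as to disintegrate $dx$ along the linear functional $x \mapsto \xi \cdot x$, this is automatic. I do not foresee a genuine obstacle; the lemma is essentially a tautological restatement of the coarea/disintegration formula packaged into the definition of $\eR$.
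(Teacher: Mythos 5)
Your proof is correct and is essentially the paper's own argument made explicit: the paper's one-line proof ("the integral of $f$ over $F^n$ along a pencil of parallel hyperplanes does not depend on the direction of the pencil") is exactly your observation that $\int_F \eR f(\xi,s)\,ds = \int_{F^n} f(x)\,dx$ via the defining relation $d\mu_\xi\, ds = dx$. Nothing further is needed.
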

\begin{proof}
The integral of $f$ over $F^n$ along a pencil of parallel hyperplanes does not depend on the direction of the pencil.
\end{proof}

\subsubsection{}
It was previously known (\cite[Theorem 5]{Chernov}, \cite[formula (8)]{Kochubei}) 
that the following inversion formula holds:
\begin{equation} \label{eqn:k8}
f(x) = \frac {1-q^{n-1}}{(1-q^{-1})(1-q^{-n})} \int_{\norm\eta = 1} 
\brac{\abs s^{-n}, \eR f(\eta, s+\eta \cdot x)} d\eta
\end{equation}
where $x \in F^n \setminus \{0\}$ and $\eta$ ranges over norm $1$ vectors in $F^n$.

\subsubsection{}
We will deduce formula \eqref{eqn:k8} from Theorem~\ref{thm:nonarch}. 
Since $f$ is compactly supported on $F^n$, we have 
$f \in \Cp$ and Theorem~\ref{thm:nonarch} implies that 
\[ f(x)=A_\beta M f (x)= \int_{v(\xi) \ge R} \beta(\xi \cdot x) Mf(\xi) d\xi  \]
for $x \in F^n\setminus \{0\}$ and $R$ a sufficiently large number. 
We can write $\xi = t^{-1} \eta$ where $t \in F^\times$
and $\eta \in F^n$ with $\norm\eta = 1$. This gives the equality
\[ f(x) = \int_{v(t) \le -R} \int_{\norm\eta = 1} \beta(t^{-1}\eta\cdot x) Mf(t^{-1}\eta)\abs{t}^{-n} d\eta d^\times t. \]
Homogeneity of $\eR f$ implies that $\abs{t}^{-1} Mf(t^{-1}\eta) = \eR f(\eta, t)$.
Therefore we have the formula
\begin{equation}\label{eqn:AM} 
	f(x) = \frac {1-q^{n-1}}{(1-q^{-1})(1-q^{-n})}\int_{v(t)\le -R} \int_{\norm\eta=1}
(\abs{\eta \cdot x - t}^{-n} - \abs{\eta\cdot x}^{-n}) \eR f(\eta,t) d\eta dt. 
\end{equation}
Choose $\eta_0 \in F^n$ with $v(\eta_0)=-v(x)$ and $\eta_0 \cdot x = 1$. 
Then $\eta \cdot x - t = (\eta-t \eta_0)\cdot x$. Note that if $v(t)>v(x)$, then
translation by $t\eta_0$ preserves the unit sphere of norm $1$ vectors.
Moreover smoothness of $\eR f$ implies that $\eR f(\eta+t\eta_0,t) = \eR f(\eta,t)$ 
if $v(t)$ is sufficiently large.
Therefore the inner integral of \eqref{eqn:AM} is zero if $v(t)$ is sufficiently large. 
Thus we may integrate over all $t \in F$ and switch the order of integration. 

\begin{lem} \label{lem:Kochubei}
The integral $\int_{\norm\eta = 1} \abs{\eta \cdot x}^{-n} d\eta$ equals zero. 
\end{lem}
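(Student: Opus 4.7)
My plan is to exploit $\GL_n(\eO)$-symmetry and homogeneity to reduce to the single case $x = e_1$, then evaluate the regularized integral by inspection. First, I interpret the left-hand side as the pairing of the Gelfand distribution $\abs{s}^{-n}$ on $F$ against the pushforward of $d\eta|_S$ under the linear functional $\eta\mapsto \eta\cdot x$, where $S := \{\eta\in F^n : \norm\eta = 1\}$. Since $\GL_n(\eO)$ preserves $S$ and $d\eta$ and acts transitively on vectors of a given norm in $F^n\setminus\{0\}$, the integral depends only on $\norm x$; and since $\abs{\cdot}^{-n}$ is homogeneous of degree $-n$ as a distribution, the substitution $x \leadsto \varpi x$ rescales it by $q^n$. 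This reduces the lemma to the case $x = e_1$.

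Next I compute the pushforward $\rho := \phi_*(d\eta|_S)$ under $\phi(\eta) = \eta_1$. Decomposing $S$ into the two pieces $\{\abs{\eta_1}=1\}$, on which $\eta_2,\dotsc,\eta_n\in\eO$ are arbitrary, and $\{\abs{\eta_1}<1\}$, on which the constraint $\norm\eta=1$ forces $(\eta_2,\dotsc,\eta_n)\in\eO^{n-1}\setminus\fp^{n-1}$, shows that $\rho$ is supported on $\eO$, equals $1$ on $\eO^\times$, and equals $1-q^{-(n-1)}$ on $\fp$; in particular $\rho(0) = 1-q^{-(n-1)}$.

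Finally I unwind the definition $\brac{\abs{s}^{-n},\rho} = \int_F \abs{s}^{-n}(\rho(s)-\rho(0))\,ds$. Since $\rho$ is constant on $\fp$, the integrand vanishes for $\abs{s}<1$; the piece at $\abs{s}=1$ contributes $q^{-(n-1)}(1-q^{-1})$; and the piece for $\abs{s}>1$ contributes $-(1-q^{-(n-1)})\int_{\abs{s}>1}\abs{s}^{-n}\,ds$, where the remaining integral is summed by the geometric series $(1-q^{-1})\sum_{k\ge 1}q^{-k(n-1)} = (1-q^{-1})q^{-(n-1)}/(1-q^{-(n-1)})$. The two nonzero contributions then cancel exactly. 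I do not foresee any substantive obstacle; the only point requiring care is the interpretation of the original integral as the distributional pairing above, which is justified because the pushforward $\rho$ is compactly supported and locally constant on $F$.
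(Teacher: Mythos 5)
Your proposal is correct and takes essentially the same route as the paper: both reduce to $x=(1,0,\dotsc,0)$ and split the unit sphere into the pieces $\abs{\eta_1}=1$ and $\eta_1\in\fp$ (your pushforward $\rho$, equal to $1$ on $\eO^\times$ and $1-q^{1-n}$ on $\fp$, is exactly this bookkeeping), after which the regularized pairing with $\abs{s}^{-n}$ cancels to zero. Your reduction via $\GL_n(\eO)$-invariance plus homogeneity of $\abs{s}^{-n}$ simply spells out what the paper abbreviates as ``using the $G$-action.''
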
 
\begin{proof}
Using the $G$-action, we may assume that $x=(1,0,\dotsc,0)$. Then
$\eta \cdot x = \eta_1$, the first coordinate of $\eta$. 
One sees that $\int_{\norm\eta = 1} \abs{\eta_1}^{-n} d\eta = 
(1-q^{-1}) + \int_\fp \abs{\eta_1}^{-n}d\eta_1 (1-q^{1-n})$. 
A simple calculation shows that the latter expression vanishes.
\end{proof}

Lemmas~\ref{lem:cavalieri} and \ref{lem:Kochubei} imply that 
$\int_{\norm\eta = 1} \abs{\eta \cdot x}^{-n} \int_F \eR f(\eta,t)dt d\eta = 0$,
so the $\abs{\eta \cdot x}^{-n}$ term in \eqref{eqn:AM} vanishes.
After a change of variables $s = t-\eta \cdot x$, the formula \eqref{eqn:AM}
becomes equal to \v{C}ernov's formula \eqref{eqn:k8}.

\section{$F$ real} \label{sect:real}
In this section we prove the invertibility of $M$ when $F=\bbR$.
Recall that in this case $K = O(n)$. 
The inversion formula is given in Theorem~\ref{thm:real},
and a reformulation using the Mellin transform is given in Theorem~\ref{thm:convR}. The $K$-finiteness of $\C$ plays a crucial role in the proofs,
so we begin by recalling the classification of the $K$-isotypic components 
of $\C$.

The non-$K$-finite situation is considered in \S\ref{sect:nonKfin}.

\subsection{Spherical harmonics} 
Let $S^{n-1}$ denote the unit sphere centered at the origin in $\bbR^n$,
which has a natural action by $O(n)$.
Let $\C(S^{n-1})$ be the space of smooth $K$-finite functions on $S^{n-1}$.
For a nonnegative integer $k$, let $H^k$ denote the space of harmonic 
polynomials on $\bbR^n$ of degree $k$. 

\begin{thm}[{\cite[Theorem I.3.1]{Helgason-GGA}, \cite[Theorem 3.1]{JW}, \cite{Kostant}}] 
\label{thm:sphR}

Let $H^k|S^{n-1}$ denote the space of harmonic polynomials 
restricted to $S^{n-1}$. Then 

(i) the restriction map $H^k \to H^k|S^{n-1}$ is an isomorphism,

(ii) $\C(S^{n-1}) = \bigoplus_{k \ge 0} H^k|{S^{n-1}}$ as $O(n)$-representations,

(iii) the $O(n)$-representations $H^k$ are irreducible and not isomorphic to each other.
\end{thm}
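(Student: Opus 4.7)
The plan for part (i) is to exploit homogeneity: if $p \in H^k$ restricts to zero on $S^{n-1}$, then for any $x \in \bbR^n \setminus \{0\}$ we have $p(x) = \norm{x}^k p(x/\norm{x}) = 0$, and $p(0)=0$ either by homogeneity or trivially when $k=0$. So the restriction map is injective, and it is surjective by the definition of $H^k|S^{n-1}$.

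For part (ii), my strategy has three steps. First, establish the Fischer decomposition of the polynomial algebra
\[ \bbR[x_1,\dotsc,x_n] = \bbR[r^2]\cdot H, \qquad H = \bigoplus_{k\ge 0} H^k, \]
where $r^2 = x_1^2+\dotsb+x_n^2$. The standard route is to put the Fischer pairing $\brac{p,q} := (p(\partial)q)(0)$ on polynomials and check that the subspaces $r^2\bbR[x]$ and $H$ are orthogonal complements; this both produces the decomposition and shows the sum $\bigoplus_k H^k$ is direct. Second, restrict to $S^{n-1}$: since $r^2$ becomes $1$ on the sphere, the restriction of the polynomial algebra equals $\sum_k H^k|S^{n-1}$, and the sum remains direct because the summands are eigenspaces of the spherical Laplacian $\Delta_{S^{n-1}}$ for the pairwise distinct eigenvalues $-k(k+n-2)$. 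Third, extend to $C^\infty$: by Stone--Weierstrass the polynomial algebra is dense in $C(S^{n-1})$, hence $L^2$-dense, so any $\varphi \in C^\infty(S^{n-1})$ has an orthogonal expansion $\varphi = \sum_k \varphi_k$ with $\varphi_k \in H^k|S^{n-1}$. If $\varphi$ is $K$-finite, then the $O(n)$-span of $\varphi$ is finite-dimensional and meets only finitely many of the $O(n)$-stable spaces $H^k|S^{n-1}$, so the expansion is in fact an algebraic finite sum.

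For part (iii), non-isomorphism is immediate because the Casimir operator (equivalently $\Delta_{S^{n-1}}$) acts on $H^k|S^{n-1}$ by the scalar $-k(k+n-2)$, which separates the $k$'s. The substantive content is irreducibility. I would argue it by exhibiting the explicit vector $(x_1+ix_2)^k$ as a highest-weight vector for the standard maximal torus of $SO(n)\subset O(n)$, verifying that its weight dominates every weight appearing in $H^k$ (so in particular the highest-weight space is one-dimensional), and concluding that any nonzero $O(n)$-invariant subspace $V\subset H^k$ must contain $(x_1+ix_2)^k$: project onto the highest-weight line by integrating against the corresponding character of the compact torus, and note that the result is nonzero for some translate of any $v\in V \setminus\{0\}$. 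Then apply lowering operators from $\mathfrak{so}(n)$ to sweep out all of $H^k$.

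The main obstacle is the irreducibility assertion in (iii); everything else is either homogeneity, an algebraic separation-of-variables identity, or spectral properties of the spherical Laplacian combined with the $K$-finiteness assumption.
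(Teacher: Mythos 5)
The paper itself gives no proof of this theorem --- it is quoted from the cited sources (Helgason, Johnson--Wallach, Kostant) --- so the comparison is with the standard proofs there. Your parts (i) and (ii) follow that classical route (homogeneity for injectivity; the Fischer decomposition via the pairing $\brac{p,q}=(p(\partial)q)(0)$; restriction to the sphere; $L^2$-density plus $K$-finiteness to cut the Hilbert-space expansion down to a finite algebraic sum) and are essentially correct. The only wording to tighten is ``meets only finitely many of the $O(n)$-stable spaces'': what you actually need is that the orthogonal projections onto the spaces $H^k|S^{n-1}$, being $O(n)$-equivariant, are nonzero on the finite-dimensional span of the $K$-translates of $\varphi$ for only finitely many $k$; your eigenvalue-separation remark (the spherical Laplacian acts by the distinct scalars $-k(k+n-2)$) supplies exactly the non-isomorphism input this requires.

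The irreducibility argument in (iii), however, has a genuine gap. The decisive step is the claim that for every $v\in V\setminus\{0\}$ some translate $g\cdot v$ has nonzero projection onto the line spanned by $(x_1+ix_2)^k$. Since the weight spaces are orthogonal for an invariant inner product and the top weight space is a line, this claim is equivalent to the statement that the $O(n)$-translates of $(x_1+ix_2)^k$ span $H^k$, which (given the one-dimensionality of the top weight space) is equivalent to the irreducibility you are trying to prove; as written the argument is circular. Likewise, ``apply lowering operators to sweep out all of $H^k$'' only shows that $V$ contains the cyclic submodule generated by $(x_1+ix_2)^k$, not that this submodule is all of $H^k$; and for $n=2$ (allowed in the theorem) the Lie algebra of $SO(2)$ is abelian, so there are no lowering operators at all and one must invoke the reflection in $O(2)$ to reach $(x_1-ix_2)^k$. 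The gap can be closed with tools you already set up: show via the Fischer pairing that a degree-$k$ harmonic polynomial orthogonal to all powers $(\xi\cdot x)^k$ with $\xi\cdot\xi=0$ vanishes on the isotropic cone, hence is divisible by $x_1^2+\dotsb+x_n^2$, hence is zero by the orthogonality of harmonics to such multiples; since the isotropic lines form a single $O(n)$-orbit up to scalars, the translates of $(x_1+ix_2)^k$ do span $H^k$ and your projection argument then goes through. Alternatively, use the zonal-harmonic argument of the cited sources (the $O(n-1)$-invariants in $H^k$ form a line, any nonzero invariant subspace contains a nonzero zonal vector, and applying this to both $V$ and $V^{\perp}$ forces $V^{\perp}=0$), or a dimension count against Weyl's dimension formula. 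The non-isomorphism assertion in (iii), via the distinct eigenvalues $-k(k+n-2)$, is fine.
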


\subsection{Decomposing $\eC$ into $K$-isotypes}

We have a decomposition $\bbR^n\setminus \{0\} = \bbR_{>0} \xt S^{n-1}$,
with $O(n)$ acting on the $S^{n-1}$ component.
Let $\C(\bbR_{>0})$ denote the space of smooth functions on $\bbR_{>0}$
and define the subspaces $\C_\pm(\bbR_{>0}), \C_c(\bbR_{>0})$ as in \S\ref{sect:C}.

Theorem~\ref{thm:sphR} implies that there is a decomposition
\[ \C = \bigoplus_{k\ge 0} \C(\bbR_{>0}) \ot H^k. \]
For $u \in \C(\bbR_{>0})$ and $Y \in H^k$, we define $u\ot Y \in \C$ by
$(u \ot Y)(x) := u(\abs x) \cdot Y(\frac x{\abs x})$. 

\subsection{Radon inversion formula}
\subsubsection{}
We have an isomorphism $\Inv:\Cm \to \Cp$ defined by 
\[ (\on{Inv} \varphi)(x) = \norm x^{-n} \varphi\left(\frac{x}{\norm x^2}\right). \]
Set $\wtilde M := \on{Inv}^{-1} \circ M$.  
Consider $\bbR_{>0}$ as a subgroup of diagonal matrices in $G$. 
Then it follows from \eqref{eqn:Mequiv} that $\wtilde M$ is a $K\xt \bbR_{>0}$ equivariant operator from $\Cp$ to $\Cp$. 

\subsubsection{} \label{subsect:A}
Let $A$ denote the space of distributions on $\bbR_{>0}$ supported on $(0,1]$. 
Then $A$ is an algebra under the convolution product $*$ 
induced by the multiplication operation on $\bbR_{>0}$.
The action of $\bbR_{>0}$ on $\Cp$ induces an action of $A$.

\begin{thm} \label{thm:real}
The operator $M : \Cp \to \Cm$ is an isomorphism. For $\varphi \in \Cm(\bbR_{>0}) \ot H^k$,
the inverse $M^{-1} : \Cm \to \Cp$ is given by the formula
\[	M^{-1}\varphi = \beta_k * \Inv(\varphi) \]
where $\beta_k$ is the distribution on $\bbR_{>0}$ defined by 
\begin{equation}
\label{eqn:beta-real}
\beta_k(t) = \frac 1{2^{n+k-2} \pi^{\frac{n-1}2} \Gamma(\frac{n+2k-1}2)} t^{k-1} \left(-\frac d{dt}\right)^{n+k-1} \left(t^{-k+1} (1-t^2)_+^{\frac{n+2k-3}2} \right) dt.
\end{equation}
\end{thm}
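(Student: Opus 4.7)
My plan is to exploit the $O(n)$-equivariance of $M$ and $\Inv$ to reduce the theorem to a one-parameter family of convolution identities on $\bbR_{>0}$, one for each $K$-isotype, and then to verify these identities using the Mellin transform.

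The operator $M$ is $O(n)$-equivariant by \eqref{eqn:Mequiv} (since $\abs{\det g}^d = 1$ for $g \in O(n)$), and $\Inv$ is manifestly $O(n)$-equivariant, so $\wtilde M$ is $O(n) \times \bbR_{>0}$-equivariant as an operator $\Cp \to \Cp$. Since the $O(n)$-representations $H^k$ are irreducible and pairwise non-isomorphic by Theorem~\ref{thm:sphR}(iii), Schur's lemma implies that $\wtilde M$ preserves the decomposition $\Cp = \bigoplus_k \Cp(\bbR_{>0}) \ot H^k$ and restricts on each summand to an operator of the form $T_k \ot \id_{H^k}$, where $T_k$ is an $\bbR_{>0}$-equivariant operator on $\Cp(\bbR_{>0})$. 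Any such operator is given by convolution with some distribution $\gamma_k$ on $\bbR_{>0}$, and combining Proposition~\ref{prop:Mfil} with the support behavior of $\Inv$ forces $\gamma_k \in A$, the algebra of \S\ref{subsect:A}. The claimed formula $M^{-1}\varphi = \beta_k * \Inv(\varphi)$ thus amounts to saying that convolution by $\beta_k$ inverts $T_k$, which reduces to the single identity $\gamma_k * \beta_k = \delta_1$ in $A$.

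To verify this identity, I would apply the Mellin transform, which is an algebra homomorphism converting convolution on $\bbR_{>0}$ into pointwise multiplication. I would compute $\widehat{\gamma_k}(s)$ by evaluating $\wtilde M$ on test functions of the form $w(\norm{x})\, Y(x)$ with $Y \in H^k$: writing the hyperplane integral in coordinates adapted to $\xi$, changing variables to eliminate $\norm\xi$, and reducing the angular integration over $S^{n-2}$ via the Funk--Hecke theorem yields $\widehat{\gamma_k}(s)$ as an explicit product of $\Gamma$-factors (either by analytic continuation from compactly supported $w$, or by directly regularizing the formal eigenfunction $w(r) = r^{-s}$). I would compute $\widehat{\beta_k}(s)$ directly from \eqref{eqn:beta-real} by pairing against $t^{-s}$ and performing $n+k-1$ integrations by parts; all boundary terms vanish thanks to the factor $(1 - t^2)_+^{(n+2k-3)/2}$, and the remaining integral is a beta integral.

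The main obstacle will be to check that the product $\widehat{\gamma_k}(s)\, \widehat{\beta_k}(s)$ simplifies to $1$ --- an identity among $\Gamma$-factors that should reduce to Euler's reflection formula and Legendre's duplication formula after careful bookkeeping of the normalization constants in \eqref{eqn:beta-real}. A secondary technicality is to ensure that both Mellin transforms are defined on a common vertical strip in the $s$-plane, but since both transforms extend meromorphically, the identity propagates from any such strip to the full plane by analytic continuation.
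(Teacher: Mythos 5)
Your skeleton --- use $K\times\bbR_{>0}$-equivariance and Theorem~\ref{thm:sphR} to reduce to one convolution identity on $\bbR_{>0}$ per isotype $H^k$, then verify it via the Mellin transform --- is the paper's strategy (that identity is exactly Theorem~\ref{thm:convR}, and the passage back to Theorem~\ref{thm:real} uses $K$-finiteness just as you say). But two of your steps are thinner than they look. First, the claim that every $\bbR_{>0}$-equivariant operator on $\Cp(\bbR_{>0})$ is convolution with a distribution is not justified: without a continuity hypothesis, equivariance only produces a linear functional $f\mapsto (T_k f)(1)$, not an element of $A$, and your Mellin argument needs $\gamma_k$ to be an honest distribution supported on $(0,1]$. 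The paper sidesteps this by exhibiting the kernel directly: Schur's lemma is applied not to $\wtilde M$ but to the sphere-averaging operators $A_t$, which act on $H^k|S^{n-1}$ by a scalar $a_k(t)$, and slicing the hyperplane integral gives \eqref{eqn:alphak}, $\wtilde M f=\alpha_k * f$ with the explicit measure $\alpha_k=\on{mes}(S^{n-2})\,t^{-n}a_k(t)(1-t^2)^{\frac{n-3}2}dt \in A$. You should do this explicit computation anyway --- it is what your ``evaluate $\wtilde M$ on $w(\norm x)Y$'' step amounts to --- and it removes the unproved structural claim.

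Second, the entire quantitative content, namely that $\fM\gamma_k(s)$ is the specific Gamma-product inverse to $\fM\beta_k(s)$ with the constant in \eqref{eqn:beta-real}, is deferred in your plan to ``Funk--Hecke plus bookkeeping.'' That route requires the closed form of the one-sided moment $\int_0^1 t^{s-n}a_k(t)(1-t^2)^{\frac{n-3}2}dt$ (a Gegenbauer integral over $(0,1)$, not $(-1,1)$), which is the real work; and your fallback of ``directly regularizing the formal eigenfunction $r^{-s}$'' is delicate because $\norm x^{-s}Y\notin\Cp$. The paper takes a genuinely different computational route here: it applies the Radon transform to $\norm x^{-s}Y(x/\norm x)$ for $n-1<\on{Re}(s)<n$, proves the projection--slice identity \eqref{eqn:F-Rf} for such non-Schwartz homogeneous functions (this is exactly what Lemma~\ref{lem:Rdist} is for), and then reads off $\fM\alpha_k(s)$ by comparing the one-dimensional Fourier transform of $\on{sgn}(t)^k\abs t^{n-1-s}$ with the Stein--Weiss formula (Theorem~\ref{thm:SW}) for $\eF(\norm x^{-s}Y)$, using Euler reflection and duplication; no Gegenbauer integral and no explicit formula for $a_k$ is ever needed, and \eqref{eqn:Ma} comes out directly. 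Your treatment of $\beta_k$ (pair \eqref{eqn:beta-real} against $t^s$, integrate by parts, beta integral) and the appeal to injectivity of the Mellin transform to get $\alpha_k*\beta_k=\delta_1$ in $A$ match the paper, which packages the integration by parts as the operator identity $L_k=t^{k-1}(-\frac d{dt})^{n+k-1}t^n$ applied to the function $\nu$ in \eqref{eqn:nu}. So the plan is workable, but as written it rests on one unproved claim and leaves the Gamma-factor evaluation --- the heart of the theorem --- unperformed; adopting the Fourier/projection--slice comparison is the cleanest way to close that gap.
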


\noindent
The derivative $\frac d{dt}$ is applied in the sense of generalized functions. 
For $\lambda \in \bbC$ with $\on{Re}(\lambda)>-1$, the 
generalized function $(1-t)_+^\lambda$ 
is defined by $\brac{(1-t)_+^\lambda , f_0(t)dt} = \int_0^1 (1-t)^\lambda f_0(t)dt$
for $f_0 \in \C_c(\bbR_{>0})$. This generalized function can be
analytically continued to all $\lambda \in \bbC$ not equal to a negative
integer \cite[\S I.3.2]{Gelfand-1}. We define 
$(1-t^2)_+^\lambda = (1+t)^\lambda \cdot (1-t)_+^\lambda$.

\begin{cor} For any number $R$ one has $M^{-1}(\C_{\le -R})
\subset \C_{\ge R}$. \end{cor}
\begin{proof}
Observe that $\beta_k$ is supported on $(0,1]$ for all $k$.
\end{proof}

\subsection{A formula for $\wtilde M$ in terms of convolution}
For $t \in (-1,1)$, define $A_t : \C(S^{n-1}) \to \C(S^{n-1})$
such that $(A_t f)(x)$ is the average value of $f$ on the $(n-2)$-sphere
$\{ \omega \in S^{n-1} \mid \omega \cdot x = t \}$.
Then $A_t$ is $O(n)$-equivariant, so by Schur's lemma
it acts on $H^k|S^{n-1}$ by a scalar $a_k(t)$. 
Since $H^k$ is stable under complex conjugation, $a_k(t)$ is real valued. 
One observes that $a_k$ is a smooth function on $(-1,1)$, 
$\abs{a_k(t)} \le 1$ for all $t \in (-1,1)$, 
and $\lim_{t\to 1} a_k(t)=1$.

Suppose that $f \in \C(\bbR_{>0}) \ot H^k$ and there exists $C>0$ and 
$\sigma > n-1$ such that $\abs{f(x)} \le C\norm{x}^{-\sigma}$ for all $x$ with $\norm x \ge 1$.
Since the intersection of $S^{n-1}$ with the hyperplane $\{ \omega \mid \omega \cdot x = t\}$ 
has radius $(1-t^2)^{1/2}$ for a unit vector $x$, we deduce that 
\begin{equation} \label{eqn:alphak}
 \wtilde M f = \alpha_k * f 
\end{equation}
where $\alpha_k$ is the measure $\on{mes}(S^{n-2}) \cdot t^{-n} \cdot a_k(t) (1-t^2)^{\frac{n-3}2}dt$
on the interval $(0,1)$ extended by zero to the whole $\bbR_{>0}$. 
The convolution $\alpha_k * f$ is well-defined because of the bound on $\abs{f(x)}$, and  
$\on{mes}(S^{n-2})$ denotes the surface area of the $(n-2)$-sphere.
In fact, \cite[Proposition 2.11]{Helgason94} says that $a_k(t)$ is the scalar
multiple of the Gegenbauer polynomial $C_k^{(\frac{n-2}2)}(t)$ normalized by $a_k(1)=1$.

The Mellin transform $\fM \alpha_k$ is defined for $s\in \bbC$ by integrating
$t^s$ against $\alpha_k$ if $\on{Re}(s) > n-1$. 

\begin{thm} \label{thm:convR}
The distribution $\alpha_k$ is invertible in $A$. The inverse $\beta_k$ is defined by
\eqref{eqn:beta-real}. The Mellin transforms are given by 
\begin{equation}\label{eqn:mellinR}
\fM \beta_k(s) = 
\frac 1{\fM\alpha_k(s)} = 2^{1-n-k} \pi^{\frac{1-n}2} \frac{\Gamma(s+k)}{\Gamma(s-n+1)} \cdot \frac{\Gamma(\frac{s-n-k}2+1)}{\Gamma(\frac{s+k+1}2)}.
\end{equation}
\end{thm}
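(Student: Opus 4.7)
The plan is to diagonalize the convolution algebra $A$ via the Mellin transform, which converts convolution of distributions supported in $(0,1]$ into pointwise multiplication on a right half-plane. Concretely, I aim to show that $\fM\alpha_k$ and $\fM\beta_k$, once computed in closed form, are reciprocals in the common half-plane of convergence; then, since $\beta_k$ defined by \eqref{eqn:beta-real} is clearly in $A$ (the support of $(1-t^2)_+^{(n+2k-3)/2}$ on $\bbR_{>0}$ is $(0,1]$, and differentiation preserves this support), invertibility of $\alpha_k$ with inverse $\beta_k$ follows from injectivity of the Mellin transform on $A$, which reduces under $t = e^{-x}$ to the standard injectivity of the Laplace transform on causal distributions.

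Step one: compute $\fM\beta_k(s)$ directly from \eqref{eqn:beta-real}. Multiplication by $t^{k-1}$ shifts the Mellin argument by $k-1$, and each $-d/dt$ satisfies $\fM(-\mu')(s)=s\,\fM\mu(s-1)$, so the $n+k-1$ derivatives combined with the $t^{k-1}$ shift contribute the falling factorial $\Gamma(s+k)/\Gamma(s-n+1)$ together with a further shift to argument $s-n$. The residual Mellin integral $\int_0^1 t^{s-n-k+1}(1-t^2)^{(n+2k-3)/2}\,dt$ reduces via $u=t^2$ to a Beta function equal to $\tfrac12 B\bigl((s-n-k)/2+1,\,(n+2k-1)/2\bigr)$. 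Combining these pieces with the normalizing constant in \eqref{eqn:beta-real} and simplifying $B(a,b) = \Gamma(a)\Gamma(b)/\Gamma(a+b)$ reproduces exactly the right-hand side of \eqref{eqn:mellinR}.

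Step two: compute $\fM\alpha_k(s)$. Using the identification of $a_k$ with $C_k^{(\nu)}/C_k^{(\nu)}(1)$ for $\nu=(n-2)/2$ noted just before the theorem, the Rodrigues formula for Gegenbauer polynomials rewrites $(1-t^2)^{(n-3)/2}a_k(t)$ as an explicit constant times $\tfrac{d^k}{dt^k}(1-t^2)^{(n+2k-3)/2}$. One then performs $k$ integrations by parts in $\int_0^1 t^{s-n}\tfrac{d^k}{dt^k}(1-t^2)^{(n+2k-3)/2}\,dt$; the boundary contributions at $t=1$ vanish because $(1-t^2)^{(n+2k-3)/2}$ has sufficiently many vanishing derivatives there for $n\ge 2$, and those at $t=0$ vanish for $\on{Re}(s)$ large (the final formula then extends to other $s$ by analytic continuation). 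The remaining integral is again a Beta function, and a single application of the Legendre duplication formula $\Gamma(2z)=2^{2z-1}\pi^{-1/2}\Gamma(z)\Gamma(z+1/2)$ collapses the resulting product of Gammas into the reciprocal of the expression found in step one, thereby verifying $\fM\alpha_k(s)\cdot\fM\beta_k(s)=1$.

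The principal obstacle is step two. The Rodrigues formula and integration by parts are standard, but one must justify the vanishing of boundary contributions (particularly at $t=0$ for small $k$, which forces working in a right half-plane and invoking analytic continuation) and execute the duplication-formula collapse so that the $s$-dependent powers of $2$ cancel on the nose. Once this is carried out, the reciprocity $\fM\alpha_k\cdot\fM\beta_k\equiv 1$ combined with Mellin injectivity yields both the invertibility of $\alpha_k$ in $A$ and the formula \eqref{eqn:mellinR} as claimed.
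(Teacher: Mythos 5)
Your proposal is correct, but the key computation is done by a genuinely different route than the paper's. For $\fM\beta_k$ you and the paper do essentially the same thing (the paper runs it in reverse: it starts from $\nu(t)=\tfrac{2}{\Gamma(\frac{n+2k-1}2)}t^{1-n-k}(1-t^2)_+^{\frac{n+2k-3}2}$, whose Mellin transform is a Beta function, and applies the operator $L_k=t^{k-1}(-\tfrac d{dt})^{n+k-1}t^n$ to produce the factor $\Gamma(s+k)/\Gamma(s-n+1)$, which is exactly your shift-and-differentiate calculus). The real difference is in $\fM\alpha_k$: the paper never uses the Gegenbauer identification in its proof. Instead it takes $f=\norm{x}^{-s}Y(x/\norm x)$ with $n-1<\on{Re}(s)<n$, uses homogeneity to write $\eR f(\omega,t)=\on{sgn}(t)^k\abs t^{n-1-s}\fM\alpha_k(s)Y(\omega)$, invokes the projection-slice relation (Lemma~\ref{lem:Rdist}) together with the known one-dimensional Fourier transform of $\on{sgn}(t)^k\abs t^{n-1-s}$, and compares with the Stein--Weiss formula (Theorem~\ref{thm:SW}) for $\eF f$; the constants then give \eqref{eqn:Ma} after reflection and duplication. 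You instead compute the Mellin integral directly from the aside preceding the theorem ($a_k=$ normalized $C_k^{(\frac{n-2}2)}$) via the Rodrigues formula, $k$ integrations by parts, and a Beta integral; I checked that your constants do reproduce \eqref{eqn:Ma} (in my bookkeeping the duplication formula gets used twice rather than once, but the powers of $2$ cancel as you claim). Your route is more elementary and self-contained as a real-variable computation, at the cost of leaning on the cited Gegenbauer identification and of some extra care: at $n=2$ the parameter $\nu=0$ is degenerate, so the Rodrigues formula should be taken in its normalized (Chebyshev/Jacobi) form; and, as you note, the boundary terms at $t=1$ vanish only because $n\ge 2$. The paper's route buys uniformity with the Fourier-analytic setup it needs anyway (and reuses verbatim in the complex case). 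One small imprecision on your last step: the Mellin transform is not defined on all of $A$ (supports in $(0,1]$ may accumulate at $0$), so the injectivity argument should be stated for distributions in $A$ of moderate growth near $0$ — which $\alpha_k$, $\beta_k$, and their convolution are; with that caveat your deduction of invertibility is fine and is in fact spelled out more explicitly than in the paper.
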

\noindent Theorem~\ref{thm:convR} implies Theorem~\ref{thm:real}.

\subsection{Relation to Fourier transform} \label{sect:real-fourier}

Let $\eS(\bbR^n)$ denote the space of Schwartz functions on 
$\bbR^n$ and $\eS'(\bbR^n)$ the dual space of tempered distributions 
on $\bbR^n$. 
The Fourier transform is defined for an integrable function $f$ on $\bbR^n$ by 
\[ \eF f(\xi) = \int_{\bbR^n} f(x) e^{-2\pi i \xi \cdot x} dx. \]
This definition can be extended \cite[\S I.3]{SW} to the space 
of tempered distributions. After this extension, $\eF$ becomes
an isomorphism $\eF : \eS'(\bbR^n) \to \eS'(\bbR^n)$ .

Let $F : \eS'(\bbR) \to \eS'(\bbR)$ denote the $1$-dimensional Fourier transform.
For $f \in \eS(\bbR^n)$, one gets the Fourier transform from the Radon transform by
\begin{equation}\label{eqn:F-Rf}
	\eF f(r\omega) = F(\eR f(\omega,t))(r) 
\end{equation}
where $F$ is the Fourier transform with respect to the $t$ variable, 
$r \in \bbR$, and $\omega \in S^{n-1}$ is a unit vector.

\begin{lem} \label{lem:Rdist}
Let $f$ be a locally integrable function
on $\bbR^n$ for which there exist $C>0$ and $\sigma >n-1$ such that 
$\abs{f(x)}\leq C \norm{x}^{-\sigma}$ for all $x$ with $\norm x\ge 1$.
Then: 

(i) $\eR f$ is a locally integrable function on $S^{n-1} \xt \bbR$.

(ii) $\eR f(\omega,t)$ is bounded for $\abs t \ge 1$.

(iii) The right hand side of \eqref{eqn:F-Rf} is well-defined as a 
generalized function on $\bbR \xt S^{n-1}$.

(iv) Equation \eqref{eqn:F-Rf} holds as an equality between
generalized functions on $\bbR_{>0} \xt S^{n-1}$. 
\end{lem}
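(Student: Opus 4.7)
For parts (i) and (ii), I would parametrize the hyperplane $\omega\cdot x=t$ by $x=t\omega+y$ with $y\perp\omega$, so $\norm x^2=t^2+\norm y^2$. When $\abs t\ge 1$ the entire hyperplane lies in $\{\norm x\ge 1\}$, so the decay hypothesis gives
\[ \abs{\eR f(\omega,t)} \le C\int_{y\perp\omega}(t^2+\norm y^2)^{-\sigma/2}\,dy = c_n\,\abs{t}^{n-1-\sigma}, \]
where $c_n=\on{mes}(S^{n-2})\int_0^\infty(1+u^2)^{-\sigma/2}u^{n-2}\,du$ is finite precisely because $\sigma>n-1$; this proves (ii) and in fact shows $\eR f(\omega,t)\to 0$ as $\abs t\to\infty$. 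For (i), Fubini and rotation invariance reduce matters to
\[ \int_{S^{n-1}}\int_{-T}^{T}\abs{\eR f(\omega,t)}\,dt\,d\omega = \on{mes}(S^{n-1})\int_{\abs{e_1\cdot x}\le T}\abs{f(x)}\,dx, \]
which I would split at $\norm x=1$, using local integrability for the inner piece and the same $(t^2+\norm y^2)^{-\sigma/2}$ computation for the slab at infinity.

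Next, for (iii), I would note that any test function $\Phi\in C_c^\infty(\bbR\xt S^{n-1})$ has partial Fourier transform $\hat\Phi(t,\omega):=\int_\bbR e^{-2\pi irt}\Phi(r,\omega)\,dr$ that is Schwartz in $t$ uniformly in $\omega$. Then I define
\[ \brac{F(\eR f),\Phi} := \int_{S^{n-1}}\int_\bbR \eR f(\omega,t)\,\hat\Phi(t,\omega)\,dt\,d\omega; \]
convergence on $\abs t\le 1$ comes from (i) and on $\abs t\ge 1$ from (ii) together with the rapid decay of $\hat\Phi$. This gives $F(\eR f)$ as a generalized function on $\bbR\xt S^{n-1}$.

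For (iv), I would test both sides against $\Phi\in C_c^\infty(\bbR_{>0}\xt S^{n-1})$ via the Schwartz function
\[ \tilde\Phi(\xi) := \norm{\xi}^{1-n}\,\Phi(\norm\xi,\xi/\norm\xi), \]
which is supported in a compact subset of $\bbR^n\setminus\{0\}$. The left-hand side of \eqref{eqn:F-Rf} then pairs as $\brac{\eF f,\tilde\Phi}=\brac{f,\eF\tilde\Phi}=\int f(x)\,\eF\tilde\Phi(x)\,dx$, and a polar-coordinate calculation (in which the $r^{1-n}$ factor cancels the Jacobian $r^{n-1}$) gives $\eF\tilde\Phi(x)=\int_{S^{n-1}}\hat\Phi(\omega\cdot x,\omega)\,d\omega$. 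Interchanging the $x$ and $\omega$ integrations would then convert the left-hand side into $\int_{S^{n-1}}\int_\bbR \eR f(\omega,t)\,\hat\Phi(t,\omega)\,dt\,d\omega$, which is exactly $\brac{F(\eR f),\Phi}$ as defined in (iii).

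The main obstacle is justifying this Fubini swap, and it is precisely where the hypothesis $\sigma>n-1$ enters essentially. By rotating to $x=\norm x\,e_1$ and passing to spherical coordinates in $\omega$, I expect to establish $\int_{S^{n-1}}(1+\abs{\omega\cdot x})^{-N}\,d\omega = O(\norm x^{-1})$ as $\norm x\to\infty$ for $N$ large. Combined with the decay of $f$, the double integral is then majorized by a finite local piece plus $C\int_{\norm x\ge 1}\norm x^{-\sigma-1}\,dx$, whose radial part is $\int_1^\infty r^{n-2-\sigma}\,dr$ and converges exactly under the hypothesis $\sigma>n-1$.
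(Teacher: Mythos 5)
Your proposal is correct and follows essentially the same route as the paper: the radial computation giving $\abs{\eR f(\omega,t)}\le \text{const}\cdot\abs t^{n-1-\sigma}$ for $\abs t\ge 1$, local integrability of $\eR f$ by splitting at $\norm x=1$, part (iii) as an immediate consequence of (i)--(ii), and part (iv) by pairing $\eF f$ against a test function supported away from the origin, using the duality $\brac{\eF f,\tilde\Phi}=\brac{f,\eF\tilde\Phi}$, passing to polar coordinates, and disintegrating $dx=d\mu_\omega\,dt$. The one point where you genuinely deviate is the justification of the Fubini interchange in (iv): the paper fixes $\omega$, observes that $t\mapsto\int_{\bbR_{>0}}\varphi(r\omega)e^{-2\pi irt}r^{n-1}dr$ is Schwartz uniformly in $\omega$, and applies (i)--(ii) to $\abs f$ to get absolute convergence of the iterated integral, whereas you integrate over $\omega$ first via the estimate $\int_{S^{n-1}}(1+\abs{\omega\cdot x})^{-N}d\omega=O(\norm x^{-1})$ and then use $\int_1^\infty r^{n-2-\sigma}dr<\infty$; both verifications are valid (your sphere estimate does hold, also for $n=2$ where the singularity $(1-u^2)^{-1/2}$ is integrable), and yours is a nice self-contained alternative that makes the role of $\sigma>n-1$ explicit. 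One small slip in (i): the displayed identity is not an equality --- you need $\abs{\eR f}\le\eR\abs f$, and $\int_{\abs{\omega\cdot x}\le T}\abs{f(x)}\,dx$ is not independent of $\omega$ for general $f$; the $\omega$-independence only appears after splitting at $\norm x=1$ and replacing $\abs f$ by the rotation-invariant majorant $C\norm x^{-\sigma}$ outside the unit ball, which is exactly the splitting you describe in the next clause, so the intended bound (and hence (i)) is still correct.
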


\begin{proof}
Since $\eR f$ is defined by integrating $f$ on a hyperplane 
of dimension $n-1$, the bound on $\abs{f(x)}$ implies that $\eR f$
is well-defined on $S^{n-1} \xt \bbR$. One also uses this bound
and local integrability of $f$ to deduce that $\eR f$ is locally integrable.
If $\omega \in S^{n-1}$ and $t\in \bbR$ with $\abs t \ge 1$, then integrating 
in the radial
direction on the hyperplane $\omega \cdot x = t$, we see that
$\abs{\eR f(\omega,t)}$ is bounded by a constant times 
$\int_0^\infty (r^2+t^2)^{-\sigma/2} r^{n-2}dr$, which 
is equal to a constant times $\abs{t}^{n-1-\sigma}$. This proves (ii). 
Property (iii) follows immediately from properties (i)-(ii).

Let $\varphi$ be a compactly supported smooth function on 
$\bbR^n\setminus\{0\} = \bbR_{>0}\xt S^{n-1}$. 
Consider $f$ as a tempered distribution on $\bbR^n$. 
By the definition of $\eF f$, 
\begin{equation}\label{eqn:Ffphi} 
	\int_{\bbR_{>0}\xt S^{n-1}} \eF f(r\omega) \varphi(r \omega) r^{n-1} dr d\omega = 
\int_{\bbR^n} f(x) \int_{\bbR_{>0}\xt S^{n-1}} \varphi(r\omega) e^{-2\pi i r (\omega \cdot x)} r^{n-1} dr d\omega dx. 
\end{equation}
Since $t\mapsto \int_{\bbR_{>0}} \varphi(r\omega) e^{-2\pi i r t} r^{n-1}dr$ is a Schwartz
function on $\bbR$, we deduce from the decomposition $dx = d\mu_\omega dt$ and 
property (ii) applied to $\abs f$ that the integral 
\[ \int_{S^{n-1}}\int_{\bbR^n} \left\lvert f(x)\int_{\bbR_{>0}} \varphi(r\omega) e^{-2\pi i r(\omega \cdot x)} r^{n-1} dr \right\rvert dx d\omega \] 
converges. Then the Fubini-Tonelli theorem implies that \eqref{eqn:Ffphi} is equal to
\[ \int_{S^{n-1}} \int_\bbR \int_{\bbR_{>0}} \eR f(\omega,t) e^{-2\pi i rt} \varphi(r\omega) r^{n-1} dr dt d\omega, \]
which proves (iv).
\end{proof}

\subsection{Proof of Theorem~\ref{thm:convR}} \label{sect:proofR}
Let $Y \in H^k$ and define $f(x) = \norm{x}^{-s} \cdot Y(\frac x {\norm x})$ for $s \in \bbC$. If $n-1 < \on{Re}(s) < n$, then $f$ is locally
integrable on $\bbR^n$ and satisfies the hypothesis of 
Lemma~\ref{lem:Rdist}. 
Moreover by \eqref{eqn:alphak} and homogeneity of $f$
we see that 
\begin{equation*}\label{eqn:Rf-M}
	\eR f(\omega, t) = \on{sgn}(t)^k \abs{t}^{n-1-s} Mf(\omega)
	= \on{sgn}(t)^k \abs{t}^{n-1-s} \fM \alpha_k(s) Y(\omega)
\end{equation*}
as a locally integrable function on $S^{n-1} \xt \bbR$.
Then Lemma~\ref{lem:Rdist} implies that 
\[ \eF f(r\omega) = F(\on{sgn}(t)^k \abs{t}^{n-1-s})(r)\fM\alpha_k(s) Y(\omega). \]
It is well-known \cite[\S II.2.3]{Gelfand-1} that
\[ F(\on{sgn}(t)^k \abs{t}^{n-1-s})(r) = i^{k} (2\pi)^{s-n+1}\frac{\sin(\pi\frac{(s-n-k+1)}2)}\pi \Gamma(n-s) \on{sgn}(r)^k \abs r^{s-n}. \]

\noindent
On the other hand, one can compute the Fourier transform of $f$ directly: 
\begin{thm}[{\cite[Theorem IV.4.1]{SW}}] \label{thm:SW}
If $0 < \on{Re}(s) < n$, then 
$\eF f(x) = \gamma \norm{x}^{s-n} Y(\frac x{\norm x})$, where 
$\gamma = i^{-k} \pi^{s-\frac n 2} \Gamma(\frac{n+k-s}2)/\Gamma(\frac{s+k}2)$.
\end{thm}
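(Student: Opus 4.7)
The plan is to derive Theorem~\ref{thm:SW} from the Bochner identity for the Fourier transform of a Gaussian times a harmonic polynomial, combined with the gamma-function integral representation
\[ \norm{x}^{-s-k} = \frac{\pi^{(s+k)/2}}{\Gamma((s+k)/2)} \int_0^\infty u^{(s+k)/2-1} e^{-\pi u\norm{x}^2}\,du, \qquad \on{Re}(s+k)>0, \]
which turns negative powers of $\norm{x}$ into a superposition of Gaussians. First I would rewrite $f(x) = \norm{x}^{-s-k} Y(x)$ using homogeneity of $Y$; under the hypothesis $0<\on{Re}(s)<n$, the function $f$ is locally integrable on $\bbR^n$ with polynomial growth at infinity, so it defines a tempered distribution and $\eF f$ is unambiguous.

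The key input is Bochner's identity
\[ \eF\bigl(Y(x)\, e^{-\pi \norm{x}^2}\bigr)(\xi) = i^{-k} Y(\xi)\, e^{-\pi\norm{\xi}^2}, \qquad Y \in H^k, \]
which I would take as known (it reduces via $O(n)$-invariance to the zonal harmonic $(x_1+ix_2)^k$ and a one-dimensional contour shift in the Gaussian integral). Rescaling $x \mapsto \sqrt{u}\, x$ and using homogeneity of $Y$ converts this into
\[ \eF\bigl(Y(x)\, e^{-\pi u \norm{x}^2}\bigr)(\xi) = i^{-k} u^{-n/2-k} Y(\xi)\, e^{-\pi\norm{\xi}^2/u}, \qquad u>0. \]

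Inserting the gamma representation of $\norm{x}^{-s-k}$ into $f$, interchanging $\eF$ with the $u$-integral, and applying the rescaled Bochner identity under the integral yields
\[ \eF f(\xi) = i^{-k} Y(\xi)\, \frac{\pi^{(s+k)/2}}{\Gamma((s+k)/2)} \int_0^\infty u^{(s-k-n)/2-1} e^{-\pi \norm{\xi}^2/u}\,du. \]
The substitution $v = 1/u$ turns the remaining integral into $\int_0^\infty v^{(n+k-s)/2-1} e^{-\pi v \norm{\xi}^2}\,dv = \Gamma((n+k-s)/2)/(\pi^{(n+k-s)/2} \norm{\xi}^{n+k-s})$ by a second application of the same identity (here the hypothesis $\on{Re}(s)<n$ keeps the exponent of $v$ with positive real part, and $\on{Re}(s)>0$ is already used at the first step). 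Collecting factors and rewriting $Y(\xi) = \norm{\xi}^k Y(\xi/\norm{\xi})$ produces exactly $\gamma \norm{\xi}^{s-n} Y(\xi/\norm{\xi})$ with the constant $\gamma = i^{-k} \pi^{s-n/2} \Gamma((n+k-s)/2)/\Gamma((s+k)/2)$.

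The main obstacle is justifying the interchange of $\eF$ with the $u$-integral, since the integrand of $f = \int \dots du$ is not jointly absolutely integrable in $(x,u)$ for arbitrary $s$ in the strip. I would handle this by pairing $\eF f$ against a Schwartz test function $\varphi$, transferring $\eF$ onto $\varphi$, and applying Fubini to $\iint f(x)\,\eF\varphi(x)\,dx$; the rapid decay of $\eF\varphi$ at infinity combined with the bound $|f(x)| \lesssim \norm{x}^{-\on{Re}(s)}$ near $0$ yields absolute convergence throughout $0<\on{Re}(s)<n$. Alternatively, one can verify the identity first in a narrower sub-strip where both sides are given by absolutely convergent integrals, and then extend to the full strip by analytic continuation in $s$, since both $\eF f$ (as a distribution depending holomorphically on $s$) and the right-hand side are holomorphic in $s$ on $0<\on{Re}(s)<n$.
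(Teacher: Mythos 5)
Your argument is correct and is essentially the standard Bochner--Hecke proof: the paper does not prove this statement itself but quotes it from \cite[Theorem IV.4.1]{SW}, and the proof given there proceeds exactly as you do, via the identity $\eF(Y(x)e^{-\pi\norm{x}^2}) = i^{-k}Y e^{-\pi\norm{\cdot}^2}$, the subordination of $\norm{x}^{-s-k}$ to Gaussians through the $\Gamma$-integral, and justification of the interchange by pairing against Schwartz functions. The constant $\gamma$ and the role of the two endpoints of the strip $0<\on{Re}(s)<n$ in your computation check out.
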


\noindent
Comparing constant multiples in the two formulas for $\eF f$ above and applying Euler's reflection formula, we have 
\[ \fM\alpha_k(s) 
= 2^{n-1-s} \pi^{n/2-1} \frac{\Gamma( \frac{n+k-s}2) \Gamma(\frac{s-n-k+1}2)
\Gamma(\frac{n+k+1-s}2)}{\Gamma(\frac{s+k}2)\Gamma(n-s)}
\]
for $n-1 < \on{Re}(s) < n$. By analytic continuation, we deduce the
equality for all $s \in \bbC$ away from poles.
The duplication formula for the $\Gamma$-function implies that
\begin{equation} \label{eqn:Ma}
	\fM\alpha_k(s) = 2^{n+k-1} \pi^{\frac{n-1}2} \frac{ \Gamma(s-n+1)}{\Gamma(s+k)} \cdot \frac{\Gamma(\frac{s+k+1}2)}{\Gamma(\frac{s-n-k}2+1)},
\end{equation} 
as stated in Theorem~\ref{thm:convR}. To finish the proof of Theorem~\ref{thm:convR}, it remains to show that $(\fM\beta_k)^{-1}$ equals the right hand side of \eqref{eqn:Ma}.
By considering the Beta function we see that
$\Gamma(\frac{s-n-k}2+1)/\Gamma(\frac{s+k+1}2)$ is the Mellin transform of $\nu(t)dt$, where 
\begin{equation}\label{eqn:nu}
 \nu(t) = \frac 2{\Gamma(\frac{n+2k-1}2)} t^{1-n-k} (1-t^2)_+^{\frac{n+2k-3}2}. 
\end{equation}
The generalized function $(1-t^2)^{\frac{n+2k-3}2}_+$ is defined in the paragraph after Theorem~\ref{thm:real}.
Multiplying the right hand side of \eqref{eqn:nu} by 
$\Gamma(s+k)/\Gamma(s-n+1) = (s-n+1)\dotsb (s+k-1)$ amounts to 
replacing $\nu$ by $L_k(\nu)$, where 
$L_k := (-\frac d{dt}\cdot t -n+1)\dotsb(-\frac d{dt}\cdot t +k-1)$. 
Observe that $L_k = t^{k-1} (-\frac d{dt})^{n+k-1} t^n$. Therefore $\fM \beta_k = (\fM \alpha_k)^{-1}$, where $\beta_k$ is defined by \eqref{eqn:beta-real}.
This proves Theorem~\ref{thm:convR}.

In the case $n=2$, the formula \eqref{eqn:Ma} is well-known (cf.~\cite[Lemma 7.17]{Wallach}, \cite[Proposition 2.6.3]{Bump}). 

\subsection{The non-$K$-finite situation} \label{sect:nonKfin}
In this subsection we consider the situation where we remove
$K$-finiteness from the definitions of $\Cp$ and $\Cm$. 
Let $\CCp$ be the space of smooth functions on $\bbR^n\setminus\{0\}$
with bounded support, and let $\CCm$ be the space of
smooth functions on $(\bbR^n)^* \setminus \{0\}$ supported away from 
a neighborhood of $0$.

\subsubsection{}
We have the operator $\sM : \CCp \to \CCm$ defined by 
\[ \sM f(\xi) = \int_{\brac{\xi,x}=1} f(x)d\mu_\xi \]
(cf.~formula \eqref{eqn:M}). 
One can deduce that $\sM$ is injective from the injectivity of 
$M : \Cp \to \Cm$. However we will show below that $\sM$ is not surjective,
and hence not an isomorphism.

\subsubsection{} Let $f \in \CCp$.  
Define $C_f= \supp(f) \cup \{0\}$, which is a compact subset of $\bbR^n$.
Let $\what C_f$ denote its convex hull. 

Let $C \subset \bbR^n$ be a convex set containing $0$.
Define $C^* \subset (\bbR^n)^*$ to be the set of $\xi$ such that the hyperplane
$\brac{\xi,x}=1$ is disjoint from $C$. By convexity, 
\[ C^* = \{ \xi \mid \brac{\xi,x}<1 \text{ for all } x\in C \}. \]
Observe that $C^*$ is a convex set\footnote{$C^*$ is 
called \cite{Cassels} the dual (polar) set of $C$. 
Note that if $C$ is compact, then $C^*$ is open. If $0$ is an interior point of $C$, then $C^*$ is bounded.} containing $0$. 
If $\check C \subset (\bbR^n)^*$ is a convex set containing $0$, 
one similarly defines the dual $\check C^* \subset \bbR^n$.
Taking duals gives mutually inverse maps between the collection
of compact convex subsets of $\bbR^n$ 
containing $0$ and the collection of open convex subsets of $(\bbR^n)^*$ 
containing $0$. 

\begin{prop} \label{prop:convex}
The connected component of $(\bbR^n)^* \setminus \supp(\sM f)$ containing $0$ 
is equal to $(\what C_f)^*$. In particular, it is convex.  
\end{prop}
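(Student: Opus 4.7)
The plan is to establish the two inclusions $(\what C_f)^* \subseteq U$ and $U \subseteq (\what C_f)^*$ separately, where $U$ denotes the connected component of $(\bbR^n)^* \setminus \supp(\sM f)$ containing $0$. The first is essentially immediate: any $\xi \in (\what C_f)^*$ defines a hyperplane $\brac{\xi,x}=1$ disjoint from $\what C_f$ and hence from $\supp(f)$, so $\sM f$ vanishes on the open convex set $(\what C_f)^*$; since this set contains $0$ and is connected, it lies in $U$.

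For the reverse inclusion, assume for contradiction $U \not\subseteq (\what C_f)^*$. Any continuous path in $U$ from $0$ to a point outside $(\what C_f)^*$ must meet $\partial (\what C_f)^*$, so there exists $\xi^* \in U \cap \partial(\what C_f)^*$; equivalently, $\sM f$ vanishes on an open neighborhood $V$ of $\xi^*$ while the hyperplane $H_{\xi^*} := \{x : \brac{\xi^*,x}=1\}$ is a supporting hyperplane of $\what C_f$. Since $0 \notin H_{\xi^*}$, the Krein--Milman theorem produces an extreme point $x_0 \in H_{\xi^*} \cap \what C_f$, which must belong to $C_f \setminus \{0\} = \supp(f)$.

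I would next reduce to the case of a smooth compactly supported function on $\bbR^n$. Choose a cutoff $\chi \in C^\infty(\bbR^n)$ equal to $1$ near $x_0$ and vanishing in a neighborhood of $0$ small enough that, on a suitably shrunk neighborhood $V' \subseteq V$ of $\xi^*$, every hyperplane $H_\xi$ with $\xi \in V'$ avoids $\supp(1-\chi)$. Then $f_1 := \chi f$ lies in $C_c^\infty(\bbR^n)$, has $x_0 \in \supp(f_1)$, and satisfies $\sM f_1 \equiv 0$ on $V'$.

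The final step is an application of Helgason's (one-sided) support theorem. Let $K_1$ be the convex hull of $\supp(f_1)$ and set $C_\epsilon := K_1 \cap \{x : \brac{\xi^*,x} \leq 1-\epsilon\}$, a compact convex subset of $K_1$ that excludes $x_0$. The key claim is that, for $\epsilon > 0$ small enough, $\eR f_1(\omega, s) = 0$ whenever $s > h_{C_\epsilon}(\omega)$: for $\omega$ whose $K_1$-extremal point already lies in $C_\epsilon$ we have $h_{C_\epsilon}(\omega) = h_{K_1}(\omega)$ and the vanishing is automatic, while for the remaining $\omega$ (those close to the direction of $\xi^*$), the relevant hyperplanes $\brac{\omega, x}=s$ correspond under $\xi = \omega/s$ to points of $V'$, where $\sM f_1$ vanishes by construction. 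Helgason's theorem then forces $\supp(f_1) \subseteq C_\epsilon$, contradicting $x_0 \in \supp(f_1)$. The main obstacle is the uniform geometric bookkeeping ensuring that all these hyperplanes indeed correspond to $\xi \in V'$ for sufficiently small $\epsilon$, which should follow by compactness from the continuity of the support function $h_{K_1}$.
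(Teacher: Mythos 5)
Everything up to the last step is sound and close in spirit to the paper: the inclusion $(\what C_f)^*\subseteq U$, the existence of a point $\xi^*\in U\cap\partial(\what C_f)^*$ with $\sM f$ vanishing near $\xi^*$ and $\brac{\xi^*,x}=1$ supporting $\what C_f$, the choice of an extreme point $x_0\in\supp(f)$ on that hyperplane, and the cutoff reduction to $f_1\in C_c^\infty(\bbR^n)$ are all fine. The gap is in the ``key claim'' for the slab $C_\epsilon=K_1\cap\{\brac{\xi^*,x}\le 1-\epsilon\}$: it is not true that the hyperplanes disjoint from $C_\epsilon$ but meeting $K_1$ all correspond, for small $\epsilon$, to $\xi=\omega/s\in V'$. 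The directions $\omega$ with $h_{C_\epsilon}(\omega)<h_{K_1}(\omega)$ are those whose maximizers on $K_1$ lie in the thin cap $K_1\cap\{\brac{\xi^*,x}>1-\epsilon\}$, and such $\omega$ need not be anywhere near $\xi^*$: they fill out (a neighborhood of) the normal cone of $K_1$ at points of the face $K_1\cap\{\brac{\xi^*,x}=1\}$. Concretely, take $K_1=[0,1]^2\subset\bbR^2$ and $\xi^*=(1,0)$; for $\omega=(\sin\theta,\cos\theta)$ with $\theta>0$ small, the unique maximizer is the corner $(1,1)$, so $h_{C_\epsilon}(\omega)<h_{K_1}(\omega)$ for \emph{every} $\epsilon>0$, yet the relevant hyperplanes have $s\approx 1$ and $\xi=\omega/s\approx(0,1)$, far from $\xi^*$ and outside $V'$. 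For such hyperplanes (grazing the cap near the relative boundary of the face) you have no vanishing information at all: they meet $\what C_f$, so $\xi\notin(\what C_f)^*$, and they are not in $V'$. Hence the hypothesis of Helgason's support theorem for $C_\epsilon$ cannot be verified from what is known, and the contradiction does not follow. (A second, more minor, issue: $C_\epsilon$ need not contain $0$, in which case hyperplanes through the origin disjoint from $C_\epsilon$ also require vanishing, and these carry no $\sM$-data whatsoever.)

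The paper's proof avoids exactly this trap by choosing the convex set through duality rather than by cutting with a slab. You already know more than vanishing near $\xi^*$: since $[0,\xi^*)\subset(\what C_f)^*$, the function $\sM f$ vanishes on a neighborhood of the \emph{whole segment} $[0,\xi^*]$. Take an open convex neighborhood $\check C$ of $[0,\xi^*]$ inside that vanishing set and put $C=\check C^*$. Then $C$ is compact, convex, contains $0$, and by biduality $C^*=\check C$, so the hyperplanes $\brac{\xi,x}=1$ disjoint from $C$ are \emph{precisely} those with $\xi\in\check C$ (and no hyperplane through $0$ misses $C$). Every hypothesis of \cite[Corollary 2.8]{Helgason-Radon} is now checkable, giving $\supp f_1\subseteq C\subseteq\{x:\brac{\xi^*,x}<1\}$, which contradicts $x_0\in\supp f_1$ --- this is the content of Lemma~\ref{lem:halfplane}. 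The paper then packages the topology differently (showing $(\what C_f)^*$ is both open and closed in $(\bbR^n)^*\setminus\supp(\sM f)$ rather than running a path-crossing contradiction), but that framing difference is cosmetic; the essential repair your argument needs is to replace $C_\epsilon$ by the dual convex set $\check C^*$ of a convex neighborhood of the full segment $[0,\xi^*]$.
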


\begin{cor} \label{cor:notsurj}
The operator $\sM : \CCp \to \CCm$ is not surjective. 
\end{cor}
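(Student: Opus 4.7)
The plan is to deduce Corollary~\ref{cor:notsurj} immediately from Proposition~\ref{prop:convex} by exhibiting a concrete $\varphi \in \CCm$ that cannot lie in the image of $\sM$. Suppose for contradiction that $\sM$ were surjective, so $\varphi = \sM f$ for some $f \in \CCp$. Proposition~\ref{prop:convex} would then identify the connected component of $0$ in $(\bbR^n)^* \setminus \supp(\varphi)$ with $(\what C_f)^*$. Since $(\what C_f)^* = \{\xi \mid \brac{\xi,x} < 1 \text{ for all } x \in \what C_f\}$ is a (possibly empty) intersection of open half-spaces, it is automatically convex. Thus it suffices to produce a $\varphi \in \CCm$ whose connected component of $0$ in the complement of its support fails to be convex.

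For the construction, I would pick any closed ball $B \subset (\bbR^n)^*$ disjoint from $0$ and take $\varphi$ to be a smooth nonnegative function with $\supp(\varphi) = B$ (for instance, a standard bump adapted to $B$). Then $\varphi$ lies in $\CCm$ because $\supp(\varphi) = B$ is compact and bounded away from $0$. Since $n \ge 2$, the complement $(\bbR^n)^* \setminus B$ is path-connected, contains $0$, and hence is the connected component of $0$ in $(\bbR^n)^* \setminus \supp(\varphi)$. Finally, $(\bbR^n)^* \setminus B$ is manifestly not convex: any two points $\xi_1, \xi_2$ chosen on opposite sides of $B$ along a line through the interior of $B$ yield a segment $[\xi_1,\xi_2]$ meeting $B$.

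Combining the two paragraphs, this $\varphi$ cannot equal $\sM f$ for any $f \in \CCp$, giving the required non-surjectivity. The only genuine work is already contained in Proposition~\ref{prop:convex}; once that is in hand the corollary is essentially a one-line consequence, and the sole thing to verify is that the example truly lives in $\CCm$ and that its complement component is non-convex, both of which are immediate. There is no substantive obstacle; the point of the statement is to highlight a rigidity forced by the proposition (convexity of the $0$-component of $(\bbR^n)^* \setminus \supp(\sM f)$) that a generic smooth $\varphi$ supported in a round ball already violates.
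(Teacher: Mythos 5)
Your proof is correct and follows exactly the route the paper intends: the paper states the corollary as an immediate consequence of Proposition~\ref{prop:convex} (convexity of the $0$-component of $(\bbR^n)^* \setminus \supp(\sM f)$), and your bump function supported on a closed ball away from $0$, whose complement is connected but non-convex for $n \ge 2$, is precisely the kind of counterexample that argument requires.
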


\begin{lem}\label{lem:halfplane}
Let $\xi_0 \in (\bbR^n)^* \setminus \{0\}$. If $\sM f$ vanishes on a 
neighborhood of the segment $[0,\xi_0] := \{t\xi_0 \mid 0\le t \le 1\}$, then 
$f$ vanishes on the half-space $\brac{\xi_0,x} \ge 1$. 
\end{lem}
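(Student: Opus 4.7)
I would translate the hypothesis into ordinary Radon transform language and then extract the vanishing of $f$ from a polynomial-moment argument. Set $\omega_0 := \xi_0/\norm{\xi_0}$ and $r_0 := 1/\norm{\xi_0}$. The homogeneity relation $\sM f(c\xi) = c^{-1}\eR f(\xi,1/c)$ (for $c>0$) identifies the half-open segment $(0,\xi_0]$ with $\{\omega_0\}\times[r_0,\infty)$ under the diffeomorphism $\xi \mapsto (\xi/\norm{\xi},1/\norm{\xi})$; the endpoint $\xi=0$ contributes nothing, since $\sM f$ vanishes automatically near $\xi=0$ because the hyperplane $\brac{\xi,x}=1$ misses $\supp(f)$ for $\norm{\xi}$ small. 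Consequently the hypothesis is equivalent to $\eR f(\omega,t)=0$ on some open neighborhood $W \subset S^{n-1}\times\bbR$ of $\{\omega_0\}\times[r_0,\infty)$. After an orthogonal change of coordinates (under which $\CCp$ and the Radon transform are stable), I may assume $\omega_0 = e_1$, so the goal becomes showing $f(x)=0$ whenever $x_1 \ge r_0$.

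Parametrize $\omega$ near $e_1$ by $\omega(\eta) = (\sqrt{1-|\eta|^2},\,\eta)$ with $\eta \in \bbR^{n-1}$, and substitute $\tau = t/\sqrt{1-|\eta|^2}$, $\vec\sigma = \eta/\sqrt{1-|\eta|^2}$. Writing $x=(x_1,y')$ with $y' \in \bbR^{n-1}$ and integrating out the delta function in $x_1$ yields
\[ \eR f(\omega(\eta),t) = \frac{1}{\sqrt{1-|\eta|^2}}\, G(\tau,\vec\sigma), \qquad G(\tau,\vec\sigma) := \int_{\bbR^{n-1}} f(\tau - \vec\sigma \cdot y',\, y')\,dy'. \]
Because $\tau \ge r_0 > 0$ in the region of interest, the argument of $f$ stays away from $0$, so $G$ is a well-defined smooth function there, and the vanishing of $\eR f$ on $W$ translates into $G(\tau,\vec\sigma)=0$ on an open neighborhood of $\{0\}\times[r_0,\infty)$ in $\bbR^{n-1}\times\bbR$.

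Differentiating under the integral sign gives the moment identity
\[ \partial_{\vec\sigma}^{\alpha} G(\tau,0) = (-1)^{|\alpha|}\, \partial_\tau^{|\alpha|} F_\alpha(\tau), \qquad F_\alpha(\tau) := \int_{\bbR^{n-1}} f(\tau,y')\,(y')^{\alpha}\,dy', \]
so $\partial_\tau^{|\alpha|} F_\alpha(\tau)=0$ for all $\tau \ge r_0$ and all multi-indices $\alpha$. Hence $F_\alpha$ agrees on $[r_0,\infty)$ with a polynomial of degree less than $|\alpha|$; but $F_\alpha$ also vanishes for $\tau$ larger than the diameter of $\supp(f)$, so that polynomial is zero and $F_\alpha \equiv 0$ on $[r_0,\infty)$. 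For each fixed $\tau \ge r_0$, the slice $y' \mapsto f(\tau,y')$ is then a smooth compactly supported function on $\bbR^{n-1}$ with every polynomial moment vanishing; its Fourier transform is an entire function of exponential type with identically zero Taylor series at the origin, hence is identically zero, and therefore $f(\tau,\cdot) \equiv 0$. This gives the desired vanishing of $f$ on $\{x_1 \ge r_0\} = \{\brac{\xi_0,x} \ge 1\}$. The main subtlety is the first step — correctly identifying the neighborhood of $\{\omega_0\} \times [r_0,\infty)$ arising from a neighborhood of $[0,\xi_0]$ and keeping the argument of $f$ bounded away from $0$ throughout the change of variables — after which the moment cascade and Fourier-analytic finish are routine.
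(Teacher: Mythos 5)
Your argument is correct, but it follows a genuinely different route from the paper's. The paper's proof first replaces $f$ by a compactly supported function agreeing with $f$ away from $0$, picks an open convex neighborhood $\check C$ of $[0,\xi_0]$ on which $\sM f$ vanishes, passes to the dual compact convex set $C=\check C^*$ (so that the integral of $f$ over every hyperplane disjoint from $C$ vanishes), and then invokes the support theorem \cite[Corollary 2.8]{Helgason-Radon} to conclude $\supp f\subset C\subset\{\brac{\xi_0,x}<1\}$. You instead prove the needed half-space statement directly: after converting the hypothesis by homogeneity into vanishing of $\eR f(\omega,t)$ for $\omega$ near $\omega_0$ and $t\ge r_0$ (your bookkeeping here is right -- the part of the neighborhood near $\xi=0$ costs nothing because $f$ has bounded support, and the ball around the endpoint $\xi_0$ supplies the values of $t$ slightly below $r_0$ that the change of variables $(\eta,t)\mapsto(\vec\sigma,\tau)$ requires), you run the classical moment argument: the $\vec\sigma$-derivatives of $G$ at $\vec\sigma=0$ vanish for $\tau\ge r_0$, so each moment $F_\alpha$ agrees on $[r_0,\infty)$ with a polynomial that vanishes at infinity, hence is zero, and each slice $f(\tau,\cdot)$ then dies because a compactly supported smooth function with all moments zero has identically vanishing entire Fourier transform. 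In effect you reprove, in elementary and self-contained form, exactly the local case of the support theorem that the paper cites; a side benefit is that you never need the paper's initial reduction to compactly supported $f$, since every hyperplane you integrate over stays a fixed positive distance from the origin, where $f$ is smooth with compact slice support. What the paper's convex-duality formulation buys in exchange is brevity and a framing that feeds directly into the identification of the component of $(\bbR^n)^*\setminus\supp(\sM f)$ with $(\what C_f)^*$ in Proposition~\ref{prop:convex}.
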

\begin{proof} 
By replacing $f$ by a compactly supported function that is equal to 
$f$ outside of a small neighborhood of $0$, we may assume that
$f$ is compactly supported. 
There exists an open convex neighborhood $\check C$ of 
$[0,\xi_0]$ such that $\sM f$ vanishes on $\check C$.
Then $C=\check C^*$ is a compact convex subset of $\bbR^n$
and $C^* = \check C$, so the integral of $f$ along any hyperplane 
disjoint from $C$ vanishes. 
Therefore \cite[Corollary 2.8]{Helgason-Radon} implies
that $\supp f \subset C$. Since $\xi_0 \in \check C$, 
one sees that $C$ is contained in the half-space $\brac{\xi_0,x}< 1$.
\end{proof}

We have the support function $H:(\bbR^n)^* \to \bbR$ associated
to $C_f$, which is defined by
\[ H(\xi) = \sup\{ \brac{\xi , x} \mid x \in C_f \}. \]
For $\xi \ne 0$, the set $\{ x\mid \brac{\xi, x} = H(\xi)\}$ is a
supporting hyperplane of $\what C_f$. The function $H$ uniquely determines
the compact convex set $\what C_f$, and $(\what C_f)^* = H^{-1}(\bbR_{<1})$.

\begin{proof}[Proof of Proposition~\ref{prop:convex}]
It is clear that $H^{-1}(\bbR_{<1})$ is an open subset of 
$(\bbR^n)^* \setminus \supp(\sM f)$. 
Note that since $\supp(\sM f)$ is closed, Lemma~\ref{lem:halfplane}
implies that if $H(\xi)=1$ then $\xi \in \supp(\sM f)$. 
Thus $(\what C_f)^*=H^{-1}(\bbR_{<1})$ is also closed in $(\bbR^n)^* \setminus \supp(\sM f)$. 
\end{proof}

\section{$F$ complex} \label{sect:complex}
In this section we prove the invertibility of $M$ when $F=\bbC$.
Recall that in this case $K = U(n)$. 
The inversion formula is given in Theorem~\ref{thm:complex},
and a reformulation using the Mellin transform is given in Theorem~\ref{thm:convC}. The $K$-finiteness of $\C$ plays a crucial role in the proofs,
so we begin by recalling the classification of the $K$-isotypic components 
of $\C$.

\subsection{Spherical harmonics} 
Let $S^{2n-1}$ denote unit sphere of norm $1$ vectors in $\bbC^n = \bbR^{2n}$,
which has a natural action by $U(n)$.
Let $\eC(S^{2n-1})$ be the space of smooth $K$-finite functions on $S^{2n-1}$.
For nonnegative integers $p,q$, 
let $H^{p,q}$ denote the homogeneous polynomials of 
degree $p+q$ on $\bbR^{2n}$ that are harmonic and satisfy 
\[ Y(\lambda z_1,\dotsc,\lambda z_n) = \lambda^p \wbar\lambda^q Y(z_1,\dotsc,z_n)\] for $\lambda \in \bbC, (z_1,\dotsc,z_n)\in \bbC^n=\bbR^{2n}$. 

\begin{thm}[{\cite[Theorem 3.1]{JW}, \cite{Kostant}}] 
\label{thm:sphC}

Let $H^{p,q}|S^{2n-1}$ denote the space of harmonic polynomials 
restricted to $S^{2n-1}$. Then 

(i) $\C(S^{2n-1}) = \bigoplus_{p,q \ge 0} H^{p,q}|{S^{2n-1}}$ as $U(n)$-representations,

(ii) the $U(n)$-representations $H^{p,q}$ are irreducible and not isomorphic to each other.
\end{thm}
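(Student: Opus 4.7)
The plan is to reduce Theorem~\ref{thm:sphC} to standard facts about polynomial representations of $U(n)$, paralleling the approach used for Theorem~\ref{thm:sphR}.

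First I would invoke Fischer's decomposition for the bigraded polynomial ring $\bbC[z_1,\dotsc,z_n,\bar z_1,\dotsc,\bar z_n]$: every polynomial admits a unique expression $\sum_k \norm{z}^{2k} Y_k$ with each $Y_k$ harmonic. Since $\norm{z}^2 = \sum_j z_j \bar z_j$ has bidegree $(1,1)$, intersecting Fischer's decomposition with the bidegree-$(p,q)$ component yields
\[ \bbC[z,\bar z]_{(p,q)} = \bigoplus_{k \ge 0} \norm{z}^{2k}\cdot H^{p-k,q-k}. \]
Restricting to $S^{2n-1}$, where $\norm{z}=1$, identifies each summand with $H^{p-k,q-k}|S^{2n-1}$; the homogeneity condition defining $H^{p,q}$ implies that the restriction map $H^{p,q} \to H^{p,q}|S^{2n-1}$ is injective, so the sum $\bigoplus_{p,q} H^{p,q}|S^{2n-1}$ inside $\eC(S^{2n-1})$ is direct.

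Next I would show that this direct sum exhausts $\eC(S^{2n-1})$. By the Stone--Weierstrass theorem, polynomial restrictions in the $z_j, \bar z_j$ are uniformly dense in $C(S^{2n-1})$. Since $S^{2n-1} = U(n)/U(n-1)$ is a compact homogeneous $U(n)$-space, the Peter--Weyl theorem identifies $\eC(S^{2n-1})$ with the algebraic direct sum of $U(n)$-isotypic components of $L^2(S^{2n-1})$; the polynomial restrictions form a $U(n)$-stable dense subspace of $L^2(S^{2n-1})$, so they contain every isotypic component and therefore equal $\eC(S^{2n-1})$. Combined with the previous step this establishes (i).

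For (ii) I would exhibit an explicit highest weight vector in each $H^{p,q}$. Relative to the diagonal maximal torus $T\subset U(n)$ and the standard positive system, the monomial $z_1^p \bar z_n^q$ is harmonic (as one checks directly from the complex Laplacian $\Delta = \sum_j \partial_{z_j}\partial_{\bar z_j}$, since only the $j=1$ and $j=n$ terms could contribute and each is annihilated), is a $T$-eigenvector, and is killed by every positive root vector. Hence $H^{p,q}$ contains the irreducible highest-weight representation of $U(n)$ with this weight. Computing $\dim H^{p,q}$ from Fischer's recursion $\dim H^{p,q} = \dim \bbC[z,\bar z]_{(p,q)} - \dim \bbC[z,\bar z]_{(p-1,q-1)}$ and matching the result against Weyl's dimension formula shows the inclusion is an equality, proving irreducibility. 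Since distinct $(p,q)$ produce distinct highest weights, the $H^{p,q}$ are pairwise non-isomorphic.

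The main obstacle is the irreducibility statement in (ii). Locating a highest weight vector and verifying its weight is routine, but confirming that $H^{p,q}$ admits no proper nontrivial $U(n)$-submodule really requires the dimension comparison between Fischer's combinatorics and Weyl's formula---or, alternatively, an explicit argument using lowering operators to generate the full space $H^{p,q}$ from $z_1^p \bar z_n^q$.
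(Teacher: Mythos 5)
The paper does not actually prove Theorem~\ref{thm:sphC}: it is quoted from \cite[Theorem 3.1]{JW} and \cite{Kostant}, just as Theorem~\ref{thm:sphR} is. So you are supplying a proof where the paper only cites one, and the route you take --- bigraded Fischer decomposition, Stone--Weierstrass plus Peter--Weyl for exhaustion, and a highest-weight-vector-plus-dimension-count argument for irreducibility --- is the standard classical argument and is essentially sound. Your harmonicity check for $z_1^p\bar z_n^q$ is correct, the Fischer recursion $\dim H^{p,q}=\dim\bbC[z,\bar z]_{(p,q)}-\dim\bbC[z,\bar z]_{(p-1,q-1)}$ does match the Weyl dimension formula for the extreme weight in question, and distinct $(p,q)$ give distinct highest weights, so (ii) comes out as you say (modulo a convention check: with the paper's action $(g\cdot f)(x)=f(g^{-1}x)$ and the standard positive system, $z_1^p\bar z_n^q$ is the \emph{lowest} weight vector and $z_n^p\bar z_1^q$ the highest; this is harmless but should be stated consistently).

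Two steps are asserted with justifications that do not quite deliver them as written. First, injectivity of each restriction $H^{p,q}\to H^{p,q}|S^{2n-1}$ does not by itself make the sum over all $(p,q)$ direct; you need either $L^2$-orthogonality (separate $p-q$ by the central circle $\lambda\cdot\mathrm{id}$, under which $H^{p,q}$ transforms by $\lambda^{q-p}$ in the paper's convention, and separate $p+q$ by the spherical Laplacian eigenvalue or by the real theory of Theorem~\ref{thm:sphR}), or else deduce directness a posteriori from (ii), since pairwise non-isomorphic irreducibles land in distinct isotypic components. Second, the claim that a dense $U(n)$-stable subspace of $L^2(S^{2n-1})$ contains every isotypic component is false in that generality; it holds here because the polynomial restrictions are $K$-finite (restrictions of polynomials of degree $\le d$ form a finite-dimensional invariant subspace), hence the subspace is preserved by each isotypic projector, and each isotypic component of $L^2(S^{2n-1})=L^2(U(n)/U(n-1))$ is finite-dimensional, so density forces containment. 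With those two justifications inserted, your argument is complete.
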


\subsection{Decomposing $\eC$ into $K$-isotypes} 
We have a decomposition $\bbC^n \setminus \{0\} = \bbR_{>0} \xt S^{2n-1}$, with $O(2n)$ (and hence $U(n)$) acting on the $S^{2n-1}$ component.
Let $\eC(\bbR_{>0})$ denote the space of smooth functions on $\bbR_{>0}$
and define the subspaces $\eC_\pm(\bbR_{>0}), \eC_c(\bbR_{>0})$
as in \S\ref{sect:C}.  

Theorem~\ref{thm:sphC} implies that there is a decomposition
\[ \C = \bigoplus_{p,q\ge 0} \C(\bbR_{>0}) \ot H^{p,q}. \]
For $u \in \C(\bbR_{>0})$ and $Y \in H^{p,q}$, we define $u\ot Y \in \C$ by
$(u \ot Y)(x) := u(\norm x) \cdot Y(\frac x{\norm x})$. 

\subsection{Radon inversion formula}

\subsubsection{} 
We have an isomorphism $\Inv:\Cm \to \Cp$ defined by 
\[ (\on{Inv} \varphi)(x) = \norm x^{-2n} \varphi\left(\frac{\wbar x}{\norm x^2}\right) \]
where $\wbar x$ is coordinate-wise conjugation.
Set $\wtilde M := \on{Inv}^{-1} \circ M$.  
Consider $\bbR_{>0}$ as a subgroup of diagonal matrices in $G$. 
Then it follows from \eqref{eqn:Mequiv} that $\wtilde M$ is a $K\xt \bbR_{>0}$ equivariant operator from $\Cp$ to $\Cp$. 

\subsubsection{} Let $A$ be the 
space of distributions on $\bbR_{>0}$ supported on $(0,1]$ (see \S\ref{subsect:A}). The action of $\bbR_{>0}$ on $\Cp$ induces an action of $A$.

\begin{thm} \label{thm:complex}
	The operator $M : \Cp \to \Cm$ is an isomorphism. For $\varphi \in \Cm(\bbR_{>0}) \ot H^{p,q}$,
the inverse $M^{-1} : \Cm \to \Cp$ is given by the formula
\[	M^{-1}\varphi = \beta_{p,q} * \Inv(\varphi) \]
where $\beta_{p,q}$ is the distribution on $\bbR_{>0}$ defined by 
\begin{equation}
\label{eqn:beta-complex}
\beta_{p,q}(t) = \frac{1}{2^{n+m-2} \pi^{n-1}\Gamma(m)} \prod_{j=1}^{n+m-1} \left( -\frac d{dt}\cdot t +p+q-2j \right)
\left(t^{-p-q-2n+1} (1-t^2)_+^{m-1} \right) dt 
\end{equation}
where $m=\min(p,q)$.
\end{thm}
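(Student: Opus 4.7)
My plan is to adapt the real-case strategy of \S\ref{sect:proofR} to the $U(n)$-action on $\bbC^n=\bbR^{2n}$, in three steps: (i) reduce the theorem to invertibility of a single convolution operator $\alpha_{p,q}$ on $\bbR_{>0}$ per $K$-isotype; (ii) compute $\fM\alpha_{p,q}$ by testing $\wtilde M$ on a homogeneous distribution and comparing its Fourier transform in two ways; (iii) verify that $(\fM\alpha_{p,q})^{-1}$ is the Mellin transform of the explicit $\beta_{p,q}$ in \eqref{eqn:beta-complex}.

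For (i), equation \eqref{eqn:Mequiv} with $d=2$ combined with the identity $(g^T)^{-1}=\bar g$ for $g\in U(n)$ shows that the conjugation in the definition of $\Inv$ is precisely what is needed for $\wtilde M := \Inv^{-1}\circ M$ to be a $K\times\bbR_{>0}$-equivariant endomorphism of $\Cp$; applying $\Inv$ to Proposition~\ref{prop:Mfil} yields the filtration $\wtilde M(\C_{\ge R})\subset\C_{\ge R}$. Theorem~\ref{thm:sphC}(ii) and Schur's lemma then show that $\wtilde M$ acts on each $\C_+(\bbR_{>0})\otimes H^{p,q}$ as convolution with a unique distribution $\alpha_{p,q}\in A$ supported in $(0,1]$.

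For (ii), test $\wtilde M$ on $f(x) = \norm{x}^{-s} Y(x/\norm{x})$ with $Y\in H^{p,q}$ and $s$ in the strip $2n-1<\on{Re}(s)<2n$, where $f$ is locally integrable on $\bbR^{2n}$ with the decay required for the complex analog of Lemma~\ref{lem:Rdist}. That lemma gives the identity $\eF f(c\omega) = F(\eR f(\omega,t))(c)$ for $c\in\bbC^\times$ and $\omega\in S^{2n-1}$, where $F$ denotes the $2$-dimensional Fourier transform on $\bbC$. Compute $\eF f$ two ways: directly via Theorem~\ref{thm:SW} applied to $Y$ viewed as a harmonic polynomial on $\bbR^{2n}$, with a careful conjugation correction accounting for the discrepancy between the $\bbC$-bilinear pairing $\xi\cdot x$ used in defining $M$ and the Hermitian pairing used by the real Fourier transform on $\bbR^{2n}$ (the same conjugation that forces $\Inv$ to carry a conjugation); and indirectly from bihomogeneity of $f$, which forces $\eR f(\omega,t)=t^p\bar t^q\abs{t}^{2n-2-s-p-q}Mf(\omega)$ (with $\abs{t}$ the usual absolute value on $\bbC$), whose $2$-dimensional Fourier transform in $t$ is another $\Gamma$-ratio from the Tate functional equation on $\bbC$. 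Equating the two and unraveling $\wtilde M = \Inv\circ M$ pins down $\fM\alpha_{p,q}(s)$ as an explicit ratio of $\Gamma$-functions in $s,p,q,n$.

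Step (iii) then amounts to a $\Gamma$-function identity: the operator $\prod_{j=1}^{n+m-1}(-\tfrac{d}{dt}\cdot t+p+q-2j)$ acts on Mellin transforms by multiplication by $\prod_{j=1}^{n+m-1}(s+p+q-2j)$, while $t^{-p-q-2n+1}(1-t^2)_+^{m-1}$ has Mellin transform equal to a Beta integral, yielding another quotient of $\Gamma$'s. This final identity is the main obstacle: unlike the real case, $\fM\alpha_{p,q}$ contains $\Gamma$-factors depending on $p$ and $q$ individually (from the $\bbC$-Tate gamma factor), and the asymmetric appearance of $m=\min(p,q)$ in \eqref{eqn:beta-complex}---both in the exponent of $(1-t^2)_+$ and in the length of the differential-operator product---records exactly which of these $\Gamma$-factors cancel against each other via duplication or reflection and which survive.
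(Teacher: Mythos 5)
Your overall strategy is the same as the paper's: decompose $\C$ into $U(n)$-isotypes $\C(\bbR_{>0})\ot H^{p,q}$, realize $\wtilde M=\Inv^{-1}\circ M$ on each isotype as convolution with a distribution $\alpha_{p,q}$ supported in $(0,1]$, compute $\fM\alpha_{p,q}$ by testing on the homogeneous function $f(x)=\norm x^{-s}Y(\frac x{\norm x})$ and comparing two evaluations of $\eF f$ (Theorem~\ref{thm:SW} directly, versus the one-variable complex Fourier transform of $t^p\wbar t^q\abs t^{2n-2-p-q-s}$ coming from homogeneity of $\eR f$ — the paper gets the latter again from Theorem~\ref{thm:SW} with $n=2$, $k=\abs{p-q}$, which is the same $\Gamma$-ratio as your Tate factor), and finally match $(\fM\alpha_{p,q})^{-1}$ with the Mellin transform of \eqref{eqn:beta-complex} via a Beta integral and the identity $\fM\bigl(-\frac d{dt}(t\nu)\bigr)(s)=s\,\fM\nu(s)$. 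Two small corrections: the natural strip is $2n-2<\on{Re}(s)<2n$ (this is what matches $\sigma>n-1$ in Lemma~\ref{lem:Cdist}; your $2n-1<\on{Re}(s)<2n$ still works and analytic continuation does the rest), and the Fourier--Radon relation carries the conjugation you allude to: $\eF f(r\omega)=F(\eR f(\wbar\omega,t))(r)$.

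The one genuine gap is in your step (i), and it propagates into (ii). Schur's lemma applied to the $K\xt\bbR_{>0}$-equivariant operator $\wtilde M$ on $\Cp$ only produces an abstract endomorphism of each $\C_+(\bbR_{>0})\ot H^{p,q}$; even granting that it is convolution with some $\alpha_{p,q}\in A$, this does not license step (ii): the test function $\norm x^{-s}Y(\frac x{\norm x})$ is not in $\Cp$, and a general element of $A$ (whose support may accumulate at $0$) need not have a Mellin transform at all, so you can neither convolve it with a homogeneous function nor identify the scalar you extract from $Mf$ with $\fM\alpha_{p,q}(s)$. What is needed — and what the paper supplies as \eqref{eqn:alphapq} — is the explicit kernel: apply Schur's lemma not to $\wtilde M$ but to the spherical averaging operators $A_t$ acting on $H^{p,q}|S^{2n-1}$, giving scalars $a_{p,q}(t)$ with $\abs{a_{p,q}(t)}\le 1$, and compute the Radon integral in slices to get $\wtilde Mf=\alpha_{p,q}*f$ with $\alpha_{p,q}=\on{mes}(S^{2n-3})\,t^{1-2n}a_{p,q}(t)(1-t^2)^{n-2}\,dt$ on $(0,1)$, valid for every $f$ in the isotype with $\abs{f(x)}\le C\norm x^{-2\sigma}$, $\sigma>n-1$. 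This explicit form both defines $\fM\alpha_{p,q}(s)$ for $\on{Re}(s)>2n-2$ and justifies plugging in the homogeneous $f$; with that repair, your plan goes through exactly as in the paper, and the support statement $\supp\beta_{p,q}\subset(0,1]$ gives the filtration property of $M^{-1}$ and hence the isomorphism on all of $\Cp$.
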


\noindent
The derivative $\frac d {dt}$ is applied in the sense of generalized functions. 
The generalized function $(1-t^2)^\lambda_+$ is defined by analytic continuation for $\lambda \in \bbC$ (see the paragraph following the statement of Theorem~\ref{thm:real}).
In particular, the regularization of $\frac 2{\Gamma(m)} (1-t^2)_+^{m-1} dt$ at $m=0$ is equal to 
$\delta(1-t)$. 

\begin{cor} For any number $R$ one has $M^{-1}(\C_{\le -R})
\subset \C_{\ge R}$. \end{cor}
\begin{proof}
	Observe that $\beta_{p,q}$ is supported on $(0,1]$ for all $p,q$.
\end{proof}

\subsection{A formula for $\wtilde M$ in terms of convolution}
We consider the dot product on $S^{2n-1} \subset \bbC^n$ induced 
by the dot product on $\bbC^n$. 
For $t \in (-1,1)$, define $A_t : \C(S^{2n-1}) \to \C(S^{2n-1})$
such that $(A_t f)(x)$ is the average value of $f$ on the $(2n-3)$-sphere
$\{ \omega \in S^{2n-1} \mid \omega \cdot \wbar x = t \}$.
Then $A_t$ is $U(n)$-equivariant, so by Schur's lemma
it acts on $H^{p,q}|S^{2n-1}$ by a scalar $a_{p,q}(t)$. 
One observes that $a_{p,q}$ is a smooth function on $(-1,1)$, 
$\abs{a_{p,q}(t)} \le 1$ for all $t \in (-1,1)$, 
and $\lim_{t\to 1} a_{p,q}(t)=1$.

Suppose that $f \in \C(\bbR_{>0}) \ot H^{p,q}$ and there exist $C>0$ and $\sigma > n-1$
such that $\abs{f(x)} \le C \norm{x}^{-2\sigma}$ for all $x$ with $\norm x \ge 1$.  
One can deduce as in the real case that 
\begin{equation} \label{eqn:alphapq}
 \wtilde M f = \alpha_{p,q} * f 
\end{equation}
where $\alpha_{p,q}$ is the measure $\on{mes}(S^{2n-3}) \cdot t^{1-2n} \cdot a_{p,q}(t) (1-t^2)^{n-2}dt$
on the interval $(0,1)$ extended by zero to the whole $\bbR_{>0}$. 
The convolution $\alpha_{p,q} * f$ is well-defined due to the bound on $\abs{f(x)}$,
and $\on{mes}(S^{2n-3})$ denotes the surface area of the $(2n-3)$-sphere.
By considering zonal spherical functions, one can check \cite[Lemma 1.2]{Watanabe} that 
$a_{p,q}(t)$ is the scalar
multiple of the Jacobi polynomial $P_{\min(p,q)}^{(n-2,\abs{p-q})}(2t^2-1)$
normalized by $a_{p,q}(1)=1$.

The Mellin transform $\fM \alpha_{p,q}$ is defined for $s\in \bbC$ by integrating
$t^s$ against $\alpha_{p,q}$ if $\on{Re}(s) > 2n-2$.

\begin{thm} \label{thm:convC}
	The distribution $\alpha_{p,q}$ is invertible in $A$. The inverse $\beta_{p,q}$ is defined by
\eqref{eqn:beta-complex}. The Mellin transforms are given by 
\begin{equation}\label{eqn:mellinC}
	\fM \beta_{p,q}(s) = 
	\frac 1{\fM\alpha_{p,q}(s)} = \pi^{1-n} \frac{\Gamma(\frac{s+p+q}2)}{\Gamma(\frac{s+\abs{p-q}}2-n+1)} \cdot \frac{\Gamma(\frac{s-p-q}2-n+1)}{\Gamma(\frac{s-\abs{p-q}}2-n+1)}.
\end{equation}
\end{thm}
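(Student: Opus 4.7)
The plan is to mirror the proof of Theorem~\ref{thm:convR} from Section~\ref{sect:proofR}. Fix $Y \in H^{p,q}$ and $s \in \bbC$ with $2n-2 < \on{Re}(s) < 2n$, and set $f(x) := \norm{x}^{-s}\, Y(x/\norm{x})$ on $\bbC^n \setminus \{0\}$. Then $f$ is locally integrable on $\bbC^n$ and satisfies the decay hypothesis preceding \eqref{eqn:alphapq}. Applying that identity together with $M = \Inv \circ \wtilde M$ and the observation that $\alpha * r^{-s} = \fM\alpha(s)\, r^{-s}$ on $\bbR_{>0}$ gives
\[ M f(\xi) = \fM\alpha_{p,q}(s)\, \norm{\xi}^{s-2n}\, Y(\wbar\xi/\norm{\xi}). \]
Combining the homogeneity relation $\eR f(\lambda\xi, t) = \abs{\lambda}^{-1} \eR f(\xi, t/\lambda)$ for $\lambda \in \bbC^\times$ with the bi-homogeneity $Y(\lambda z) = \lambda^p \wbar{\lambda}^q Y(z)$ then extracts
\[ \eR f(\omega, t) \;=\; (t/\abs{t})^{p-q}\, \abs{t}^{2n-s-2}\, \fM\alpha_{p,q}(s)\, Y(\wbar{\omega}),\qquad \omega \in S^{2n-1},\ t \in \bbC^\times. \]

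Next I would prove a complex analogue of Lemma~\ref{lem:Rdist} to establish the Fourier-slice identity $\eF f(\eta\omega) = F(\eR f(\omega,\cdot))(\eta)$ as an equality of generalized functions on $\bbC^\times \times S^{2n-1}$, where $F$ denotes the $1$-dimensional Fourier transform on $\bbC$ compatible with $\eF$. Substituting the formula for $\eR f(\omega, \cdot)$ and invoking the classical computation \cite[Ch.~II]{Gelfand-1} of the Fourier transform over $\bbC$ of $(t/\abs{t})^m \abs{t}^\sigma$ gives
\[ \eF f(\xi) \;=\; \fM\alpha_{p,q}(s)\, \gamma_\bbC(s)\, \norm{\xi}^{s-2n}\, Y(\wbar\xi/\norm{\xi}) \]
with an explicit $\Gamma$-quotient $\gamma_\bbC(s)$. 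Independently, $\eF$ on $\bbC^n$ (paired via the complex bilinear form $\xi\cdot x$) differs from the Euclidean Fourier transform on $\bbR^{2n}$ only by precomposition with coordinate-wise conjugation and a rescaling of the frequency by a factor of $2$. Hence Theorem~\ref{thm:SW}, applied to $\wtilde{f}(y) := f(\wbar y) \in \C(\bbR_{>0}) \ot H^{q,p}$ regarded as a harmonic polynomial of real degree $p+q$ on $\bbR^{2n}$, yields
\[ \eF f(\xi) \;=\; \gamma \cdot 2^{s-2n}\, \norm{\xi}^{s-2n}\, Y(\wbar\xi/\norm{\xi}),\qquad \gamma = i^{-(p+q)}\, \pi^{s-n}\, \frac{\Gamma(\frac{2n+p+q-s}{2})}{\Gamma(\frac{s+p+q}{2})}. \]
Matching the two expressions for $\eF f$ and simplifying via Euler reflection and Legendre duplication then yields the explicit form of $\fM\alpha_{p,q}(s)$ asserted in \eqref{eqn:mellinC}.

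To finish, I would verify directly that $\beta_{p,q}$ defined by \eqref{eqn:beta-complex} has Mellin transform $1/\fM\alpha_{p,q}(s)$. The substitution $u = t^2$ identifies the Mellin transform of $\frac{2}{\Gamma(m)}\, t^{-p-q-2n+1}(1-t^2)_+^{m-1}\,dt$ with the Beta-integral ratio $\Gamma(\frac{s-p-q}{2}-n+1)/\Gamma(\frac{s-\abs{p-q}}{2}-n+1)$. Each factor $(-\frac{d}{dt}\cdot t + c)$ acts on Mellin transforms by multiplication by $s+c$, so the product $\prod_{j=1}^{n+m-1}(-\frac{d}{dt}\cdot t + p+q-2j)$ multiplies the Mellin transform by $\prod_{j=1}^{n+m-1}(s+p+q-2j) = 2^{n+m-1}\Gamma(\frac{s+p+q}{2})/\Gamma(\frac{s+\abs{p-q}}{2}-n+1)$. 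Combined with the scalar $1/(2^{n+m-2}\pi^{n-1}\Gamma(m))$ in \eqref{eqn:beta-complex}, this reproduces the right hand side of \eqref{eqn:mellinC}. Invertibility of $\alpha_{p,q}$ in $A$ then follows from $\beta_{p,q}$ being supported on $(0,1]$ together with $\fM\alpha_{p,q}\cdot \fM\beta_{p,q} = 1$.

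I expect the main obstacle to be in the second paragraph: the careful identification of the $n$-dimensional Fourier transform on $\bbC^n$ with its Euclidean avatar on $\bbR^{2n}$ (which forces the appearance of the conjugation $x \mapsto \wbar{x}$), together with the explicit $1$-dimensional Fourier transform over $\bbC$ of $(t/\abs{t})^{p-q}\abs{t}^{2n-s-2}$, require careful bookkeeping of powers of $2$, $i$, and $\pi$. Once these constants are pinned down, the rest of the argument reduces to routine $\Gamma$-function manipulations.
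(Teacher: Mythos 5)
Your overall route is the same as the paper's: test everything on the homogeneous functions $f(x)=\norm{x}^{-s}Y(x/\norm{x})$ with $Y\in H^{p,q}$ and $2n-2<\on{Re}(s)<2n$, compute $\eR f$ via \eqref{eqn:alphapq}, pass to $\eF f$ by a complex Fourier--slice lemma (this is exactly Lemma~\ref{lem:Cdist}), compute $\eF f$ a second time by Theorem~\ref{thm:SW}, compare constants, and then do the Beta-function/Mellin calculus identifying $\beta_{p,q}$. Your last paragraph (Mellin transform of $t^{-p-q-2n+1}(1-t^2)_+^{m-1}$, the action of each factor $-\frac d{dt}\cdot t+c$ as multiplication by $s+c$, the bookkeeping of the powers of $2$ and $\pi$ in \eqref{eqn:beta-complex}, and the conclusion of invertibility in $A$) coincides with the paper's argument and is correct.

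The genuine problem is in the middle step, which is where the entire content of \eqref{eqn:mellinC} lives. With the normalization used in the paper, $\eF f(\xi)=\int_{\bbC^n}f(x)e^{-2\pi i\on{Re}(\xi\cdot\wbar x)}dx$, the complex Fourier transform \emph{is} the Euclidean Fourier transform on $\bbR^{2n}$ under $\bbC^n=\bbR^{2n}$, because $\on{Re}(\xi\cdot\wbar x)$ is the standard real inner product: there is no coordinate-wise conjugation twist and, in particular, no rescaling of the frequency by a factor of $2$. Hence Theorem~\ref{thm:SW} applies verbatim in real dimension $2n$ with harmonic degree $p+q$ and gives
\[ \eF f(\xi)= i^{-(p+q)}\,\pi^{s-n}\,\frac{\Gamma(\frac{2n+p+q-s}2)}{\Gamma(\frac{s+p+q}2)}\,\norm{\xi}^{s-2n}\,Y\!\left(\frac{\xi}{\norm{\xi}}\right), \]
without your extra factor $2^{s-2n}$ and without $\wbar\xi$; the factor you insert would correspond to a trace-type pairing that the paper does not use. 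This matters because the other side of your comparison, the one-dimensional Fourier transform over $\bbC$ of $(t/\abs t)^{p-q}\abs t^{2n-2-s}$, is left as an unspecified constant $\gamma_\bbC(s)$ cited from Gel'fand--Shilov: unless that formula is taken in the very same (nonstandard) convention, the spurious $s$-dependent factor $2^{s-2n}$ survives into $\fM\alpha_{p,q}(s)$ and the asserted formula comes out wrong. Since the theorem is precisely a statement about this exact constant, the central computation is not actually completed as written. The paper sidesteps the convention issue by never leaving its fixed normalization: it computes the needed one-dimensional formula $F(t^p\wbar t^q\abs t^{2n-2-p-q-s})$ by applying Theorem~\ref{thm:SW} a second time, now with real dimension $2$ (i.e.\ $n=2$ in that theorem), $k=\abs{p-q}$ and $Y(x_1,x_2)=(x_1\pm ix_2)^{\pm(p-q)}$, so that both evaluations of $\eF f$ are in the same convention; the residual discrepancy $i^{-(p+q)}$ versus $i^{-\abs{p-q}}$, i.e.\ a factor $(-1)^{\min(p,q)}$, is then absorbed when Euler's reflection formula is applied, yielding \eqref{eqn:Mapq}. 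If you adopt this device, or explicitly fix one convention together with the matching one-dimensional formula, the rest of your argument goes through.
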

\noindent Theorem~\ref{thm:convC} implies Theorem~\ref{thm:complex}.

\subsection{Relation to the Fourier transform} 

Let $\eS(\bbC^n)$ denote the space of Schwartz functions on 
$\bbC^n$ and $\eS'(\bbC^n)$ the dual space of tempered distributions 
on $\bbC^n$. 
The Fourier transform is defined for an integrable function $f$ on $\bbC^n$ by 
\[ \eF f(\xi) = \int_{\bbC^n} f(x) e^{-2\pi i \on{Re}(\xi \cdot \wbar x)} dx. \]
This definition coincides with the one from \S\ref{sect:real-fourier}
by identifying $\bbC^n = \bbR^{2n}$. The Fourier transform can be
extended to an isomorphism of tempered distributions 
$\eF : \eS'(\bbC^n) \to \eS'(\bbC^n)$ .

Let $F : \eS'(\bbC) \to \eS'(\bbC)$ denote the Fourier transform
over $\bbC$. For $f \in \eS(\bbC^n)$, one gets $\eF f$ from the 
Radon transform by
\begin{equation}\label{eqn:F-Rf-complex}
	\eF f(r\omega) = F(\eR f(\wbar\omega,t))(r) 
\end{equation}
where $F$ is the Fourier transform with respect to the $t$ variable, 
$r\in \bbC$, and $\omega \in S^{2n-1}$ is 
a unit vector.
We have the following complex analog of Lemma~\ref{lem:Rdist}, which is 
proved in exactly the same way.
\begin{lem} \label{lem:Cdist}
Let $f$ be a locally integrable function
on $\bbC^n$ for which there exist $C>0$ and $\sigma >n-1$ such that 
$\abs{f(x)}\leq C \norm{x}^{-2\sigma}$ for all $x$ with $\norm x\ge 1$.
Then: 

(i) $\eR f$ is a locally integrable function on $S^{2n-1} \xt \bbC$.

(ii) $\eR f(\omega,t)$ is bounded for $\abs t \ge 1$.

(iii) The right hand side of \eqref{eqn:F-Rf-complex} is well-defined as a 
generalized function on $\bbC \xt S^{2n-1}$.

(iv) Equation \eqref{eqn:F-Rf-complex} holds as an equality between
generalized functions on $\bbR_{>0} \xt S^{2n-1}$. 
\end{lem}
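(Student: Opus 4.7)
The plan is to mirror the proof of Lemma~\ref{lem:Rdist} with real dimensions replaced by complex ones. The hyperplane $\wbar\omega\cdot x = t$ in $\bbC^n = \bbR^{2n}$ is a real $(2n-2)$-dimensional affine subspace; since $\norm\omega = 1$, its closest point to the origin is $\wbar t \omega$, and parametrizing $x = \wbar t \omega + y$ with $y \in \{\omega\cdot\wbar y = 0\}$ (using Hermitian orthogonality to $\omega$) yields the Pythagorean identity $\norm x^2 = \abs t^2 + \norm y^2$.

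Items (i) and (ii) follow from the same estimate as in the real case. Local integrability of $\eR f$ on $S^{2n-1}\xt\bbC$ uses local integrability of $f$ together with the decay hypothesis, and for $\abs t \ge 1$ the radial estimate on the hyperplane gives
\[ \abs{\eR f(\wbar\omega, t)} \le C \int_0^\infty (\abs t^2 + r^2)^{-\sigma} r^{2n-3}\,dr = C'\abs t^{2(n-1-\sigma)}, \]
where the substitution $r = \abs t s$ yields convergence of the resulting $s$-integral precisely because $\sigma > n-1$, and the exponent on $\abs t$ is nonpositive. Item (iii) is then immediate: for each fixed $\omega$, $t \mapsto \eR f(\wbar\omega, t)$ is a tempered distribution on $\bbC$, so its 2-dimensional Fourier transform is well-defined as a distribution in $r$, and the joint dependence on $\omega$ is a generalized function on $\bbC\xt S^{2n-1}$.

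For (iv), I would pair both sides against a compactly supported smooth $\varphi$ on $\bbR_{>0}\xt S^{2n-1} = \bbC^n\setminus\{0\}$. Using the tempered-distribution definition of $\eF f$ and the fact that for $r\in \bbR_{>0}$ the kernel $e^{-2\pi i \on{Re}(r\omega\cdot\wbar x)}$ reduces to $e^{-2\pi i r \on{Re}(\omega\cdot\wbar x)}$, the inner $r$-integral becomes a Schwartz function of the real variable $\on{Re}(\omega\cdot\wbar x)$. The fibration $dx = d\mu_\omega\, dt$ over $t\in\bbC$, followed by a Fubini--Tonelli rearrangement using the bound from (ii) applied to $\abs f$, identifies the result with the pairing of the right-hand side of \eqref{eqn:F-Rf-complex} against $\varphi$, on noting that the conjugation $\wbar\omega\cdot x = t \Leftrightarrow \omega\cdot\wbar x = \wbar t$ matches the complex Fourier-transform convention $F(g)(r) = \int_\bbC g(t) e^{-2\pi i \on{Re}(r\wbar t)}\,dt$.

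The main step beyond a verbatim transcription of the real case is justifying the Fubini--Tonelli exchange: unlike in the real situation, $\psi_\omega$ now decays only in $\on{Re}(t)$, while (ii) supplies polynomial decay of $\eR f$ in $\abs t$. Combining these two estimates carefully gives absolute convergence of the iterated integral, if necessary after first mollifying $f$ to reduce to Schwartz data---for which (iv) holds by direct computation from \eqref{eqn:F-Rf-complex}---and then passing to the limit. This is the only point at which the argument genuinely departs from a straight adaptation of Lemma~\ref{lem:Rdist}.
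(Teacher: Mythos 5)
Your parts (i)--(iii) are fine and agree with the paper, whose proof of this lemma is literally ``same as Lemma~\ref{lem:Rdist}''. The genuine gap is in (iv), at exactly the point you flag. Having reduced the inner integral to $\psi_\omega(u)=\int_0^\infty\varphi(r\omega)e^{-2\pi i r u}r^{2n-1}dr$, a Schwartz function of the single real variable $u=\on{Re}(\omega\cdot\wbar x)$, you assert that combining its decay with the bound $\eR\abs{f}(\wbar\omega,t)\le C\abs{t}^{2(n-1-\sigma)}$ ``carefully'' yields absolute convergence of $\int_{S^{2n-1}}\int_{\bbC}\eR\abs{f}(\wbar\omega,t)\,\abs{\psi_\omega(\on{Re}t)}\,dt\,d\omega$. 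It does not: writing $t=t_1+it_2$, the integrand has no decay in $t_2$ beyond $\abs{t}^{2(n-1-\sigma)}$, and $\int_{\abs{t_2}\ge1}\abs{t_2}^{2(n-1-\sigma)}dt_2$ diverges whenever $\sigma\le n-\tfrac12$. The hypothesis only gives $\sigma>n-1$, and the bad sub-range $n-1<\sigma\le n-\tfrac12$ occurs in the paper's application ($f=\norm{x}^{-s}Y$ with $2\sigma=\on{Re}(s)\in(2n-2,2n)$). Moreover this is not merely a weak estimate: for $f\ge0$ comparable to $\norm{x}^{-2\sigma}$ the iterated absolute integral is genuinely infinite, so no Fubini--Tonelli in this form can succeed. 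The mollification fallback as stated does not repair this, since the right-hand side limit you would need to identify is this same non--absolutely convergent expression.

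The correct adaptation --- and the natural reading of ``proved in exactly the same way'' --- is to keep the Fourier transform in the $t$-variable two-dimensional, by grouping the radial variable with the circle direction of $\omega$. Writing $\xi=z\omega$ with $z\in\bbC^\times$ and averaging the $S^{2n-1}$-integration over the circle $\{\lambda\omega:\abs\lambda=1\}$, the inner integral becomes
\[ \int_{\bbC}\varphi(z\omega)\,\abs{z}^{2n-2}\,e^{-2\pi i\on{Re}(z\,\omega\cdot\wbar x)}\,dz, \]
the two-dimensional Fourier transform of the compactly supported smooth function $z\mapsto\varphi(z\omega)\abs{z}^{2n-2}$ evaluated at $\omega\cdot\wbar x$. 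This is Schwartz in $t\in\bbC$, uniformly in $\omega$, i.e.\ it decays rapidly in $\abs{t}$ and not only in $\on{Re}t$; the Fubini--Tonelli step then runs verbatim as in Lemma~\ref{lem:Rdist}, using (i) and (ii) for $\abs{f}$ and only $\sigma>n-1$, and the resulting pairing is precisely the distributional pairing defining the right-hand side of \eqref{eqn:F-Rf-complex} (recall that $F$ there is the Fourier transform over $\bbC$). With this regrouping your argument goes through; without it, step (iv) is not proved.
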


\subsection{Proof of Theorem~\ref{thm:convC}} 
Let $Y \in H^{p,q}$ and define $f(x) = \norm{x}^{-s} \cdot Y(\frac x {\norm x})$
for $s\in \bbC$. If $2n-2 < \on{Re}(s) < 2n$, then $f$ is locally 
integrable on $\bbC^n$ and satisfies the hypothesis of Lemma~\ref{lem:Cdist}.
Moreover by \eqref{eqn:alphapq} and homogeneity of $f$
we see that 
\[	\eR f(\wbar \omega, t) = t^p \wbar t^q \abs{t}^{2n-2-p-q-s} \wtilde Mf(\omega)
	= t^p \wbar t^q \abs{t}^{2n-2-p-q-s} \fM \alpha_{p,q}(s) Y(\omega)
\]
as a locally integrable function on $S^{2n-1} \xt \bbC$.
Then Lemma~\ref{lem:Cdist} implies that 
\[ \eF f(r\omega) = F(t^p \wbar t^q \abs{t}^{2n-2-p-q-s})(r) 
\fM\alpha_{p,q}(s) Y(\omega) \]
as generalized functions on $\bbR_{>0} \xt S^{2n-1}$.

\begin{lem} If $2n-2 < \on{Re}(s) < 2n$, then 
\[ F(t^p \wbar t^q \abs{t}^{2n-2-p-q-s})(r) = 
	\pi^{s-2n+1} i^{-\abs{p-q}} \frac{\Gamma(\frac{2n+\abs{p-q}-s}2)}{\Gamma(\frac{s-2n+2+\abs{p-q}}2)}
		r^p \wbar r^q \abs r^{s-2n-p-q}  \]
as locally integrable functions on $\bbC$.
\end{lem}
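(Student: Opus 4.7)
The plan is to evaluate the Fourier integral directly in polar coordinates. Writing $t = \rho e^{i\theta}$ and $r = Re^{i\phi}$ with $\rho, R > 0$, and using that in the complex case $\abs{t}$ denotes the usual absolute value $\rho$, the integrand simplifies to $\rho^{2n-2-s}\, e^{i(p-q)\theta}\, e^{-2\pi i R\rho\cos(\phi-\theta)}$ against the measure $\rho\, d\rho\, d\theta$. Thus the integral factors as an angular piece times a radial piece, which I would compute separately.

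For the angular piece, the shift $\theta \mapsto \theta + \phi$ separates out the prefactor $e^{i(p-q)\phi}$ and reduces the $\theta$-integral to the classical integral representation of the Bessel function of integer order $p-q$. A short case analysis on the sign of $p-q$, using $J_{-\nu} = (-1)^\nu J_\nu$ for integer $\nu$, collapses the answer to
\[ 2\pi\, i^{-\abs{p-q}}\, e^{i(p-q)\phi}\, J_{\abs{p-q}}(2\pi R\rho). \]
The radial piece is then the classical Mellin--Hankel transform $\int_0^\infty \rho^{a-1} J_{\abs{p-q}}(2\pi R\rho)\,d\rho$ with $a = 2n - s$. For $\on{Re}(s)$ in the stated strip $(2n-2, 2n)$ this is absolutely convergent and evaluates to $2^{a-1}(2\pi R)^{-a}\, \Gamma(\frac{\abs{p-q}+a}{2})/\Gamma(\frac{\abs{p-q}-a+2}{2})$.

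It remains to assemble the two pieces and identify the result with the claimed formula. The powers of $2$ and $\pi$ combine into $\pi^{s-2n+1}$; the gamma ratio is exactly $\Gamma(\frac{2n+\abs{p-q}-s}{2})/\Gamma(\frac{s-2n+2+\abs{p-q}}{2})$; and since $r^p\wbar r^q \abs{r}^{s-2n-p-q} = R^{s-2n} e^{i(p-q)\phi}$, the remaining factors match on the nose. I expect the main difficulty to be bookkeeping rather than conceptual: carefully tracking the phase $i^{-\abs{p-q}}$ (as opposed to $i^{-(p-q)}$) through the parity case analysis, and keeping the Fourier normalization (with $2\pi$ inside the exponential and $\abs{\cdot}$ denoting the usual, not normalized, absolute value on $\bbC$) consistent with the normalizations in the Mellin--Hankel identity. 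Outside the given strip one would need analytic continuation, but the stated range is precisely where both the original Fourier integral and the Hankel formula converge absolutely, so no further argument is needed for the statement as phrased.
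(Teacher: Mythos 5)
Your computation takes a genuinely different route from the paper: the paper's entire proof is a one-liner, applying the already-quoted Theorem~\ref{thm:SW} in dimension $2$ with $k=\abs{p-q}$ and $Y(x_1,x_2)=(x_1+ix_2)^{p-q}$ (resp.\ $(x_1-ix_2)^{q-p}$), after observing that $t^p\wbar t^q\abs t^{2n-2-p-q-s}=\abs t^{-(s-2n+2)}Y(t/\abs t)$ and that $2n-2<\on{Re}(s)<2n$ is exactly the hypothesis $0<\on{Re}(s-2n+2)<2$. Your polar-coordinate argument is in effect a self-contained proof of that two-dimensional case of Theorem~\ref{thm:SW} (Hecke--Bochner), and the bookkeeping is right: the angular integral does produce $2\pi i^{-\abs{p-q}}e^{i(p-q)\phi}J_{\abs{p-q}}(2\pi R\rho)$, and the Mellin--Hankel evaluation assembles to exactly the stated constant.

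However, the convergence claims at the end are false, and this is a genuine gap, not bookkeeping. The function $t^p\wbar t^q\abs t^{2n-2-p-q-s}$ is never in $L^1(\bbC)$ for $s$ in the strip (absolute integrability at infinity would require $\on{Re}(s)>2n$), so ``the original Fourier integral'' converges absolutely nowhere in the stated range; $F$ must be understood as the Fourier transform of a tempered distribution, which is how the lemma is used in the proof of Theorem~\ref{thm:convC}. Likewise the radial integral $\int_0^\infty\rho^{a-1}J_{\abs{p-q}}(2\pi R\rho)\,d\rho$ with $a=2n-s$ converges absolutely only for $\on{Re}(a)<\frac12$ and as an improper oscillatory integral only for $\on{Re}(a)<\frac32$, so it diverges outright on the part of the strip where $\on{Re}(s)\le 2n-\frac32$. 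Consequently the splitting into angular and radial pieces and the appeal to the Mellin--Hankel formula are only formal as written: you still owe a justification, e.g.\ pair both sides with a test function and insert a Gaussian (or Abel) regulator to justify the interchanges, or prove the identity on the sub-strip $2n-\frac32<\on{Re}(s)<2n$ where the iterated integral makes sense and then extend by analytic continuation in $s$, or simply invoke Theorem~\ref{thm:SW} as the paper does, since that theorem already packages precisely this distributional argument.
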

\begin{proof}
	Apply Theorem~\ref{thm:SW} for $n=2$, $k = \abs{p-q}$, and 
	$Y(x_1,x_2) = (x_1 +i x_2)^{p-q}$ if $p\ge q$ or $Y(x_1,x_2)=(x_1-i x_2)^{q-p}$ if $p\le q$. 
\end{proof}

Alternatively, we can use Theorem~\ref{thm:SW} to find that 
$\eF f(x) = \gamma \norm{x}^{s-2n} Y(\frac x{\norm x})$, where 
$\gamma = i^{-p-q} \pi^{s-n} \Gamma(\frac{2n+p+q-s}2)/\Gamma(\frac{s+p+q}2)$. 
Comparing the two formulas we have derived for $\eF f$ and applying Euler's reflection formula, 
we conclude that 
\begin{equation}\label{eqn:Mapq}
	\fM \alpha_{p,q}(s) = \pi^{n-1} \frac{\Gamma(\frac{s+\abs{p-q}}2-n+1)\Gamma(\frac{s-\abs{p-q}}2 - n +1)}{\Gamma(\frac{s+p+q}2)\Gamma(\frac{s-p-q}2 -n+1)}, 
\end{equation} 
as stated in Theorem~\ref{thm:convC}. The equation holds \emph{a priori} for 
$2n-2 < \on{Re}(s) < 2n$, and we deduce by analytic continuation that it holds for all $s \in \bbC$, away from poles.

To finish the proof of Theorem~\ref{thm:convC}, it remains to show that
$(\fM\beta_{p,q})^{-1}$ is equal to the right hand side of \eqref{eqn:Mapq}. 
By considering the Beta function we see that
$\Gamma(\frac{s-p-q}2-n+1)/\Gamma(\frac{s-\abs{p-q}}2-n+1)$ is
the Mellin transform of $\nu(t)dt$, where 
\begin{equation}\label{eqn:nuc}
 \nu(t) = \frac 2{\Gamma(m)} t^{-p-q-2n+1} (1-t^2)_+^{m-1} 
\end{equation}
for $m=\min(p,q)$. Note that if $m=0$, then $\nu(t)dt = \delta(1-t)$.
Multiplying the right hand side of \eqref{eqn:nuc} by 
$\Gamma(\frac{s+p+q}2)/\Gamma(\frac{s+\abs{p-q}}2-n+1) = \prod_{j=1}^{n+m-1} (\frac{s+p+q}2-j)$ amounts to replacing $\nu$ by 
$L_{p,q} (\nu)$, where $L_{p,q}$ is the differential operator 
\[ 2^{1-n-m} \prod_{j=1}^{n+m-1} \left( -\frac d{dt}\cdot t+p+q-2j \right). \]  
Theorem~\ref{thm:convC} is proved.

In the case $n=2$, the formula \eqref{eqn:Mapq} is well-known (cf.~\cite[Lemma 7.23]{Wallach}, \cite[Proposition III.3.7]{Duflo}).

\end{document}